\newcommand{\acli}[1]{\textit{\acl{#1}}}		
\newcommand{\acdef}[1]{\textit{\acl{#1}} \textup{(\acs{#1})}\acused{#1}}		
\colorlet{MyRed}{FireBrick!50!Crimson}
\colorlet{MyBlue}{DodgerBlue!75!black}
\colorlet{MyGreen}{DarkGreen!85!black}
\colorlet{MyViolet}{DarkMagenta}
\colorlet{MyLightBlue}{DodgerBlue!20}
\colorlet{MyLightGreen}{MyGreen!20}
\colorlet{PrimalColor}{MyBlue}
\colorlet{PrimalFill}{MyLightBlue}
\colorlet{DualColor}{MyRed}
\colorlet{RevColor}{BlueViolet}
\colorlet{LinkColor}{MediumBlue}
\newcommand{\afterhead}{.\;}		
\newcommand{\ackperiod}{}		
\newcommand{\para}[1]{\medskip\paragraph{\textbf{#1\afterhead}}}
\crefname{assumption}{Assumption}{Assumptions}
\crefname{algo}{Algorithm}{Algorithms}
\crefname{example}{Example}{Examples}
\crefname{method}{Method}{Methods}
\crefname{assumptionenum}{Assumption}{Assumptions}
\crefname{Q}{}{}
\crefname{eq}{}{}
\crefname{item}{}{}
\def\endenv{\hfill{\small$\lozenge$}\smallskip}		
\theoremstyle{plain}
\newtheorem{lemma}{Lemma}		
\newtheorem{proposition}{Proposition}		
\newtheorem*{corollary*}{Corollary}		
\theoremstyle{definition}
\newtheorem{assumption}{Assumption}		
\newtheorem{algo}{Algorithm}		
\newtheorem*{definition*}{Definition}		
\newtheorem*{assumption*}{Assumptions}		
\newtheorem*{example*}{Example}		
\theoremstyle{remark}
\newtheorem*{remark*}{Remark}		
\newcounter{proofpart}
\numberwithin{example}{section}		
\newcommand{\debug}[1]{#1}		
\newcommand{\newmacro}[2]{\newcommand{#1}{\debug{#2}}}		
\newcommand{\newop}[2]{\DeclareMathOperator{#1}{\debug{#2}}}		
\DeclarePairedDelimiter{\braces}{\{}{\}}		
\DeclarePairedDelimiter{\bracks}{[}{]}		
\DeclarePairedDelimiter{\parens}{(}{)}		
\DeclarePairedDelimiter{\abs}{\lvert}{\rvert}		
\DeclarePairedDelimiter{\pospart}{[}{]_{+}}		
\DeclarePairedDelimiterX{\inner}[2]{\langle}{\rangle}{#1,#2}		
\DeclarePairedDelimiter{\norm}{\lVert}{\rVert}		
\DeclarePairedDelimiterXPP{\twonorm}[1]{}{\lVert}{\rVert}{}{#1}		
\DeclarePairedDelimiterXPP{\dnorm}[1]{}{\lVert}{\rVert}{_{\ast}}{#1}		
\DeclarePairedDelimiterX{\braket}[2]{\langle}{\rangle}{#1,#2}		
\DeclarePairedDelimiterX{\setdef}[2]{\{}{\}}{#1:#2}		
\DeclarePairedDelimiterXPP{\exclude}[1]{\mathopen{}\setminus}{\{}{\}}{}{#1}
\newcommand{\alt}[1]{#1'}		
\newcommand{\N}{\mathbb{N}}		
\newcommand{\R}{\mathbb{R}}		
\DeclareMathOperator*{\argmax}{arg\,max}		
\DeclareMathOperator{\bd}{bd}		
\DeclareMathOperator{\bigoh}{\mathcal{O}}		
\DeclareMathOperator{\dist}{dist}		
\newmacro{\eig}{\lambda}		
\DeclareMathOperator{\grad}{\nabla}		
\DeclareMathOperator{\Hess}{Hess}		
\DeclareMathOperator{\Jac}{D}		
\DeclareMathOperator{\one}{\mathds{1}}		
\newmacro{\coef}{\lambda}		
\newmacro{\dd}{\:d}		
\newcommand{\subs}{\leftarrow}      
\newcommand{\ddt}{\frac{d}{dt}}		
\newcommand{\eps}{\varepsilon}		
\newcommand{\insum}{\sum\nolimits}		
\newmacro{\pexp}{p}		
\newmacro{\qexp}{q}		
\newmacro{\rexp}{r}		
\newcommand{\cf}{cf.\xspace}		
\newcommand{\eg}{e.g.,\xspace}		
\newcommand{\ie}{i.e.,\xspace}		
\newcommand{\textpar}[1]{\textup(#1\textup)}		
\newcommand{\txs}{\textstyle}		
\newcommand{\from}{\colon}		
\newcommand{\defeq}{\coloneqq}		
\newcommand{\eqdef}{\eqqcolon}		
\newmacro{\set}{\mathcal{S}}		
\newmacro{\points}{\mathcal{M}}		
\newmacro{\intpoints}{\points^{\circ}}		
\newmacro{\point}{x}		
\newmacro{\pointalt}{\alt\point}		
\newmacro{\dpoints}{\mathcal{Y}}		
\newmacro{\dpoint}{y}		
\newmacro{\dpointalt}{\alt\dpoint}		
\newmacro{\base}{\point}
\newmacro{\basealt}{q}		
\newmacro{\open}{\mathcal{U}}		
\newmacro{\closed}{\mathcal{C}}		
\newmacro{\cpt}{\mathcal{K}}		
\newmacro{\nhd}{\mathcal{U}}		
\newmacro{\start}{1}		
\newmacro{\halfafterstart}{3/2}		
\newmacro{\afterstart}{2}		
\newmacro{\running}{\start,\afterstart,\dotsc}		
\newmacro{\halfrunning}{\start,\halfafterstart,\dotsc}
\newmacro{\runalt}{k}		
\newmacro{\run}{n}		
\newmacro{\nRuns}{T}		
\newmacro{\runs}{\mathcal{\nRuns}}		
\newmacro{\state}{X}		
\newmacro{\dstate}{Y}		
\newcommand{\new}[1][\point]{#1^{+}}		
\newcommand{\iter}[1][\state]{\debug{#1}_{\runalt}}		
\newcommand{\prev}[1][\state]{\debug{#1}_{\run-1}}		
\newcommand{\curr}[1][\state]{\debug{#1}_{\run}}		
\newcommand{\prelead}[1][\state]{\debug{#1}_{\run-1}^{+}}		
\newcommand{\lead}[1][\state]{\debug{#1}_{\run}^{+}}		
\renewcommand{\next}[1][\state]{\debug{#1}_{\run+1}}		
\newmacro{\ctime}{t}		
\newmacro{\ctimealt}{s}		
\newmacro{\cstart}{0}		
\newmacro{\horizon}{T}		
\newcommand{\edim}{M}     
\newmacro{\vecspace}{\R^{\edim}}		
\newmacro{\ambient}{\R^{m}}		
\newmacro{\coord}{i}		
\newmacro{\vdim}{d}		
\newmacro{\vvec}{z}		
\newmacro{\bvec}{e}		
\newmacro{\bvecs}{\mathcal{E}}		
\newmacro{\subspace}{\mathcal{W}}		
\newmacro{\wvec}{w}		
\newmacro{\subdim}{m}		
\newmacro{\tanhull}{\mathcal{Z}}		
\newmacro{\tanvec}{z}		
\newcommand{\dual}[1]{#1^{\ast}}		
\newmacro{\dspace}{\dual\vecspace}		
\newmacro{\dvec}{w}		
\newmacro{\dbvec}{\eps}		
\newmacro{\ones}{\mathbf{1}}		
\newmacro{\mat}{M}		
\newmacro{\eye}{I}		
\newop{\tcone}{TC}		
\newop{\dcone}{\dual\tcone}		
\newop{\ncone}{NC}		
\newop{\pcone}{PC}		
\newmacro{\cvx}{\mathcal{C}}		
\newmacro{\subd}{\partial}		
\newmacro{\jmat}{J\vecfield}		
\newmacro{\hmat}{H}		
\newop{\Opt}{Opt}		
\newop{\Sol}{Sol}		
\newmacro{\obj}{f}		
\newmacro{\objalt}{g}		
\newmacro{\sobj}{F}		
\newmacro{\param}{\theta}		
\newmacro{\params}{\Theta}		
\newmacro{\gvec}{g}		
\newmacro{\vecfield}{v}
\newmacro{\gbound}{G}		
\newmacro{\vbound}{M}		
\newcommand{\sol}[1][\point]{#1^{\ast}}		
\newmacro{\strong}{\ell}		
\newmacro{\smooth}{\beta}		
\newmacro{\lips}{L}		
\newmacro{\minmax}{\Phi}		
\newmacro{\minvar}{x}		
\newmacro{\minvaralt}{\alt x}		
\newmacro{\minvars}{\mathcal{X}}		
\newmacro{\maxvar}{y}		
\newmacro{\maxvaralt}{\alt y}		
\newmacro{\maxvars}{\mathcal{Y}}		
\newmacro{\minsol}{\sol[\minvar]}		
\newmacro{\maxsol}{\sol[\maxvar]}		
\newmacro{\mindim}{\vdim_{\minvars}}		
\newmacro{\maxdim}{\vdim_{\maxvars}}		
\newmacro{\minstate}{X}		
\newmacro{\maxstate}{Y}		
\newmacro{\payoffmat}{A}		
\newmacro{\perturb}{\phi}	   
\newop{\NE}{NE}		
\newop{\CE}{CE}		
\newop{\CCE}{CCE}		
\newop{\brep}{br}		
\newop{\reg}{Reg}		
\newop{\preg}{\overline{Reg}}		
\newop{\val}{val}		
\newmacro{\play}{i}		
\newmacro{\playalt}{j}		
\newmacro{\nPlayers}{N}		
\newmacro{\players}{\mathcal{\nPlayers}}		
\newmacro{\pure}{a}		
\newmacro{\purealt}{a'}		
\newmacro{\nPures}{A}		
\newmacro{\pures}{\mathcal{\nPures}}		
\newmacro{\cost}{c}		
\newmacro{\loss}{\ell}		
\newmacro{\pay}{u}		
\newmacro{\payv}{v}		
\newmacro{\pot}{\obj}		
\newmacro{\game}{\mathcal{G}}		
\newmacro{\gamefull}{\game(\players,\points,\pay)}		
\newmacro{\fingame}{\Gamma}		
\newmacro{\fingamefull}{\Gamma(\players,\pures,\pay)}		
\newop{\Eucl}{\Pi}		
\newop{\logit}{\Lambda}		
\newop{\dkl}{KL}		
\newmacro{\hreg}{h}		
\newmacro{\hconj}{\hreg^{\ast}}		
\newmacro{\breg}{D}		
\newmacro{\mprox}{\mathcal{P}}		
\newmacro{\mirror}{Q}		
\newmacro{\fench}{F}		
\newmacro{\hstr}{K}		
\newmacro{\proxdom}{\points_{\hreg}}		
\newmacro{\proxdomi}{\points_{\hreg_{\play}}}		
\DeclarePairedDelimiterXPP{\proxof}[2]{\mprox_{#1}}{(}{)}{}{#2}		
\newmacro{\zone}{\mathbb{D}}		
\DeclareMathOperator{\ex}{\mathbb{E}}		
\DeclareMathOperator{\prob}{\mathbb{P}}		
\newmacro{\seed}{\theta}		
\newmacro{\seeds}{\Theta}		
\newmacro{\history}{\mathcal{H}}		
\newmacro{\sample}{\omega}		
\newmacro{\samples}{\Omega}		
\newmacro{\filter}{\mathcal{F}}		
\newmacro{\probspace}{(\samples,\filter,\prob)}		
\newmacro{\event}{\mathcal{E}}       
\newmacro{\eventalt}{\mathcal{H}}       
\newmacro{\mean}{\mu}		
\newmacro{\sdev}{\sigma}		
\newmacro{\variance}{\sdev^{2}}		
\newcommand{\as}{\debug{\textpar{a.s.}}\xspace}		
\providecommand\given{}		
\DeclarePairedDelimiterXPP{\exof}[1]{\ex}{[}{]}{}{
\renewcommand\given{\nonscript\,\delimsize\vert\nonscript\,\mathopen{}} #1}
\DeclarePairedDelimiterXPP{\probof}[1]{\prob}{(}{)}{}{
\renewcommand\given{\nonscript\:\delimsize\vert\nonscript\:\mathopen{}} #1}
\newmacro{\step}{\gamma}		
\newmacro{\temp}{\eta}		
\newmacro{\efftime}{\tau}		
\newmacro{\tinv}{M}		
\newcommand{\apt}[2][]{\state_{#1}(#2)}		
\newmacro{\orcl}{V}		
\newop{\err}{err}		
\newmacro{\signal}{\hat\vecfield}		
\newmacro{\error}{W}		
\newmacro{\noise}{U}		
\newmacro{\bias}{b}		
\newmacro{\er}{\epsilon}		
\newmacro{\brown}{W}		
\newmacro{\bbound}{B}		
\newmacro{\totbound}{S}		
\newmacro{\noisepar}{\sdev}		
\newmacro{\noisevar}{\variance}		
\newmacro{\snoise}{\xi}		
\newmacro{\sbias}{\psi}		
\newmacro{\scorr}{\theta}		
\newmacro{\bcoef}{\Const}		
\newmacro{\errbound}{\sdev}		
\newmacro{\lbound}{\zeta}		
\newmacro{\injradius}{\varrho}		
\newmacro{\mix}{\delta}		
\newmacro{\unitvar}{E}		
\newmacro{\pertvar}{W}		
\newmacro{\radius}{r}		
\newmacro{\limset}{\mathcal{L}}		
\newcommand{\orbit}[2][]{\point_{#1}(#2)}		
\newcommand{\dotorbit}[2][]{\dot\point_{#1}(#2)}		
\newmacro{\flowmap}{\Phi}		
\newcommandtwoopt{\flow}[2][\ctime][\point]{\flowmap_{#1}(#2)}
\newmacro{\graph}{\mathcal{G}}
\newmacro{\vertices}{\mathcal{V}}
\newmacro{\edges}{\mathcal{E}}
\newmacro{\gmat}{g}		
\newmacro{\gdist}{\dist_{\gmat}}
\newmacro{\ball}{\mathbb{B}}		
\newmacro{\sphere}{\mathbb{S}}		
\newcommand{\Grass}[1][\subdim]{
G({#1},\GrassRelay)
}      
\newcommand{\GrassRelay}[1][\edim]{#1}		
\newmacro{\saddle}{\hat\point}     
\newmacro{\Const}{C}
\newmacro{\const}{c}
\newmacro{\conf}{\alpha}
\newmacro{\lyap}{E}
\newmacro{\aux}{\tilde\lyap}		
\newcommand{\ssstyle}{\scriptscriptstyle}
\newcommand{\tspace}[1][\point]{\mathcal{T}_{#1} \points  }		
\newmacro{\tbundle}{ \mathcal{T}\points  }		
\DeclarePairedDelimiterXPP{\expof}[2]{\exp_{#1}}{(}{)}{}{#2}
\DeclarePairedDelimiterXPP{\logof}[2]{\log_{#1}}{(}{)}{}{#2}
\DeclarePairedDelimiterXPP{\offset}[2]{\Delta}{(}{)}{}{#1;#2}
\newop{\retr}{\mathcal{R}}		
\DeclarePairedDelimiterXPP{\retrof}[2]{\retr_{#1}}{(}{)}{}{#2}
\newmacro{\mfd}{\mathcal{M}}		
\newmacro{\curve}{\gamma}          
\DeclarePairedDelimiterXPP{\ptrans}[3]{\Gamma_{#1\to #2}}{(}{)}{}{#3}
\newmacro{\sect}{K}    
\newmacro{\rgrad}{\mathrm{grad}} 
\newmacro{\rhess}{\mathrm{Hess}} 
\newmacro{\retrbase}{{\mathcal{R}}}
\newmacro{\nbhd}{\mathcal{U}}		
\newmacro{\nn}{\nonumber}		
\newmacro{\rbound}{R}		
\newmacro{\bgame}{{ \diff\minvar^{\minind} \wedge \diff\maxvar^{\maxind} }}     
\newmacro{\stochfield}{\mathsf{V}}		
\newmacro{\bindex}{I}	
\newmacro{\compbindex}{{\bar{I}}}	
\newmacro{\lyapalt}{ \hat\lyap}     
\newmacro{\colyap}{\tilde\lyap}     
\newmacro{\sset}{\mathcal{S}}		
\newmacro{\flowcurve}{\theta}		
\newmacro{\Pflowmap}{\lambda}		
\newcommandtwoopt{\Pflow}[2][\ctime][\point]{\Pflowmap_{#1}(#2)}
\newmacro{\Ent}{\textup{Ent}}
\newmacro{\conj}{\mathcal{C}}     
\newmacro{\nconj}{\mathcal{H}} 
\newmacro{\pcoord}{\shortparallel}     
\newcommand{\normal}[1]{\tilde{#1}}
\newmacro{\fdiffeo}{\normal{\Phi}}     
\newmacro{\fnbhd}{\normal{\nhd}}     
\newmacro{\fpoint}{\normal{\point}}     
\newmacro{\fcurve}{\normal{\curve}}     
\newmacro{\fvecfield}{\normal{\vecfield}}     
\newmacro{\pvecfield}{\normal{\vecfield}^{\pcoord}}     
\newcommand{\fpflow}[1][h]{\normal{\Pflowmap}(#1)}     
\newmacro{\cptrans}{\normal{\Lambda} \left(u,\fpflow[u] \right)}	
\newmacro{\cptransc}{\normal{\Lambda}_{\coord}\left(u,\fpflow[u] \right)}   
\newmacro{\cptransp}{\normal{\Lambda}^{\pcoord}\left(u,\fpflow[u] \right)}	
\newmacro{\cptranscp}{\normal{\Lambda}^{\pcoord}_{\coord}\left(u,\fpflow[u] \right)}   
\newmacro{\noiselevel}{\sqrt{\run \log^{1+\frac{\epsilon}{2}} \run}}
\newacro{LHS}{left-hand side}
\newacro{RHS}{right-hand side}
\newacro{iid}[i.i.d.]{independent and identically distributed}
\newacro{lsc}[l.s.c.]{lower semi-continuous}
\newacro{APT}{asymptotic pseudotrajectory}
\newacro{GD}{gradient dynamics}
\newacro{GF}{gradient flow}
\newacro{ICT}{internally chain-transitive}
\newacro{MDS}{martingale difference sequence}
\newacro{NE}{Nash equilibrium}
\newacro{ODE}{ordinary differential equation}
\newacro{SA}{stochastic approximation}
\newacro{SFO}{stochastic first-order oracle}
\newacro{SG}{stochastic gradient}
\newacro{SP}{saddle-point}
\newacro{AH}{Arrow\textendash Hurwicz}
\newacro{BDG}{Burkholder\textendash Davis\textendash Gundy}
\newacro{ConO}{consensus optimization}
\newacro{RM}{Robbins\textendash Monro}
\newacro{KW}{Kiefer\textendash Wolfowitz}
\newacro{HR}{Hessian Riemannian}
\newacro{GDA}{gradient descent/ascent}
\newacro{SGA}{symplectic gradient adjustment}
\newacro{SGD}{stochastic gradient descent}
\newacro{SGDA}{stochastic gradient descent/ascent}
\newacro{SPSA}{simultaneous perturbation stochastic approximation}
\newacro{ASGDA}[alt-SGDA]{alternating stochastic gradient descent/ascent}
\newacro{SEG}{stochastic extra-gradient}
\newacro{EG}{extra-gradient}
\newacro{PEG}{Popov's extra-gradient}
\newacro{RG}{reflected gradient}
\newacro{OG}{optimistic gradient}
\newacro{GAN}{generative adversarial network}
\newacro{NN}{neural network}
\newacro{FTRL}{``follow the regularized leader''}
\newacro{wp1}[w.p.$1$]{with probability $1$}
\newacro{WAC}{weak asymptotic coercivity}
\newacro{CGD}{Competitive Gradient Descent}
\newacro{PPM}{proximal point method}
\newacro{SMD}{stochastic mirror descent}
\newacro{OMD}{optimistic mirror descent}
\newacro{RRM}[RRM]{Riemannian Robbins\textendash Monro}
\newacro{RGDA}[RGDA]{Riemannian gradient descent(/ascent)}
\newacro{RSGD}[RSGD]{Riemannian stochastic gradient descent}
\newacro{ReSGD}[Rtr-SGD]{retraction-based stochastic gradient descent}
\newacro{REG}[REG]{Riemannian extra-gradient}
\newacro{RSEG}[RSEG]{Riemannian stochastic extra-gradient}
\newacro{RPEG}[RPEG]{Riemannian Popov's extra-gradient}
\newacro{RRG}[RRG]{Riemannian reflected gradient}
\newacro{ROG}[ROG]{Riemannian optimistic gradient}
\newacro{RPPM}[RPPM]{Riemannian proximal point method}
\newacro{RSGDA}[RSGDA]{Riemannian stochastic gradient descent(/ascent)}
\newacro{RASGDA}[R-alt-SGDA]{Riemannian alternating stochastic gradient descent/ascent}
\newacro{RKW}[RKW]{Riemannian Kiefer\textendash Wolfowitz}
\newacro{RAPT}[RAPT]{Riemannian asymptotic pseudotrajectory}
\newacro{MD}{mirror descent}
\newacro{SMD}{stochastic mirror descent}
\newacro{NGD}{natural gradient descent}
\newacro{NPG}{natural policy gradient}
\newacro{PFS}{parallel frame system}
\newacro{FCS}{Fermi coordinate system}
\newacro{RSGM}[RSGD]{Riemannian stochastic gradient descent}
\newacro{RSCEG}{Riemannian stochastic corrected extragradient}
\begin{document}


\title
[Riemannian stochastic optimization methods avoid saddle points]
{Riemannian stochastic optimization methods\\avoid strict saddle points}

\author
[Y.~P.~Hsieh]
{Ya-Ping Hsieh$^{\ast}$}
\address{$^{\ast}$\,%
ETH Zürich.}
\email{yaping.hsieh@inf.ethz.ch}
\author
[M.~R.~Karimi]
{Mohammad Reza Karimi$^{\ast}$}
\email{mkarimi@inf.ethz.ch}
\author
[A.~Krause]
{\\Andreas Krause$^{\ast}$}
\email{krausea@ethz.ch}
\author
[P.~Mertikopoulos]
{Panayotis Mertikopoulos$^{\sharp}$}
\address{$^{\sharp}$\,%
Univ. Grenoble Alpes, CNRS, Inria, Grenoble INP, LIG, 38000 Grenoble, France.}
\email{panayotis.mertikopoulos@imag.fr}

\subjclass[2020]{%
Primary 62L20, 37N40;
secondary 90C15, 90C48.}

\keywords{%
Optimization on manifolds;
stochastic approximation;
saddle-point avoidance;
Riemannian Robbins-Monro algorithms.}

\begin{abstract}
%
%
Many modern machine learning applications \textendash\ from online principal component analysis to covariance matrix identification and dictionary learning \textendash\ can be formulated as minimization problems on Riemannian manifolds, typically solved with a Riemannian stochastic gradient method (or some variant thereof).
However, in many cases of interest, the resulting minimization problem is \emph{not} geodesically convex, so the convergence of the chosen solver to a desirable solution - i.e., a local minimizer - is by no means guaranteed.
In this paper, we study precisely this question, that is, whether stochastic Riemannian optimization algorithms are guaranteed to avoid saddle points with probability $1$.
For generality, we study a family of retraction-based methods which, in addition to having a potentially much lower per-iteration cost relative to Riemannian gradient descent, include other widely used algorithms, such as natural policy gradient methods and mirror descent in ordinary convex spaces.
In this general setting, we show that, under mild assumptions for the ambient manifold and the oracle providing gradient information, the policies under study avoid strict saddle points / submanifolds with probability $1$, from any initial condition.
This result provides an important sanity check for the use of gradient methods on manifolds as it shows that, almost always, the limit state of a stochastic Riemannian algorithm can only be a local minimizer.
\end{abstract}
\maketitle

\allowdisplaybreaks		
\acresetall		

\section{Introduction}
\label{sec:introduction}

\newcommand{\Qone}{\textup{\textpar{Q1}}\xspace}
\newcommand{\Qtwo}{\textup{\textpar{Q2}}\xspace}

Modern machine learning systems have achieved remarkable success in the efficient optimization of highly non-convex functions using straightforward \emph{Euclidean} techniques like stochastic gradient descent.
A widely accepted hypothesis to explain this phenomenon is that, when the learning system under study \textendash\ \eg a neural network \textendash\ is sufficiently expressive, local minimizers are essentially as good as global ones \cite{CHMB+15,Kaw16};
by this token, a training algorithm can attain satisfactory performance by simply evading \emph{saddle points} of the model's loss surface.

This observation has sparked a far-reaching research thread examining the behavior of various algorithms around saddle points in non-convex functions.
Informally, these studies aim to tackle two fundamental questions:
\begin{enumerate}
[left=\parindent]
\item[\Qone]
\itshape
When does a given scheme, like stochastic gradient descent, avoid saddle points?
\item[\Qtwo]
\itshape
Can we \emph{augment} a given scheme so that it efficiently escapes saddle points?
\end{enumerate}
Of the above questions, \Qone focuses on \emph{explaining} the empirical success of commonly used schemes, while the resolution of \Qtwo usually revolves around proposing new schemes with desirable escape guarantees.
These complementary perspectives have been extensively studied over the past decade, leading to a fairly complete understanding of how and when a Euclidean (stochastic) algorithm escapes saddle points, \cf \cite{Pem90, lee2016gradient, lee2019first, panageas2019first, hou2020analysis,MHKC20, hsieh2021limits, AMPW22, MHC23} and references therein.

In parallel to the above, the recent surge of interest in Riemannian optimization has prompted a closer examination of \emph{Riemannian} methods, thereby motivating an extension of \Qone and \Qtwo to a manifold setting \textendash\ itself due to a wide range of breakthrough applications to machine learning and data science, from natural language processing, and signal processing to dictionary learning and robotics \cite{oja1992principal, meyer2011linear, sra2015conic, sun2016complete}.
As a result, there is an increasing demand for a comprehensive exploration of various spaces, such as the $\vdim$-dimensional torus, Grassmannian or Stiefel manifolds, hyperbolic spaces, and many others.

Unfortunately, in a proper Riemannian setting, only \Qtwo has received sufficient scrutiny thus far.
Recent works by \citet{criscitiello2019efficiently} and \citet{sun2019escaping} have shown that standard Riemannian deterministic algorithms can be augmented via the injection of an infinitesimal amount of noise (proportional to the method's desired accuracy), to achieve comparable escape guarantees in terms of oracle complexity as the corresponding Euclidean methods \cite{jin2017escape}.
To the best of our knowledge, all existing results for \Qone concern \emph{deterministic} methods \cite{lee2016gradient, lee2019first, panageas2019first, hou2020analysis} which are significantly limited in scope in large-scale machine learning applications because of their prohibitively high per-iteration cost.

\para{Our results and techniques}
In view of the above, our paper aims to provide a general answer to \Qone for a broad class of Riemannian stochastic optimization methods \textendash\ including \acl{RSGD}, its retraction-based and/or optimistic variants, etc.
Concretely, we focus throughout on a flexible template of \acdef{RRM} schemes \cite{RM51, karimi2022dynamics} which, in addition to the specific algorithms of interest mentioned above, also includes a range of Euclidean methods that can be analyzed efficiently from a Riemannian viewpoint.

Informally, our main result may be stated as follows:
\begin{quote}
\centering
\itshape
Under any stochastic \acl{RRM} method, the probability of converging to a strict saddle point \textpar{or a submanifold thereof} is zero.
\end{quote}
This statement provides firm grounds for accepting the output of a stochastic Riemannian optimization method as valid, as it shows that saddle points are avoided \acl{wp1}.%
\footnote{We recall here that a strict saddle manifold is a set of critical points each of which has at least one negative Hessian eigenvalue.
Such manifolds include ridge hypersurfaces and other connected sets of non-isolated saddle points that are common in the loss landscapes of high-dimensional machine learning models, so this result has significant cutting power in this regard.}
In the context of stochastic methods, our result builds on a series of foundational results by \citet{Pem90} and \citet{BD96} who focused on \emph{hyperbolic traps} (isolated saddle points with invertible Hessian).
These results were subsequently extended by \citet{BH96} to a more general class of unstable \emph{sets}, but this analysis remained grounded in a flat, Euclidean setting.
The connecting tissue of our analysis with these works is the notion of an \acf{APT}, which allows us to couple the long-run behavior of discrete-time \ac{RRM} methods to that of an associated Riemannian gradient flow.
[This discrete-to-continuous comparison is crucial for our analysis in order to apply center stable manifold techniques \cite{Shu87} to the \ac{RRM} framework.]
However, this comes at a significant cost, as establishing the \ac{APT} property in a Riemannian setting is quite challenging.
To achieve this, we employ a set of techniques recently developed by \cite{KHMK22} which allow us to make this comparison precise and establish the desired avoidance result.

\section{Background on Riemannian Manifolds}
\label{sec:setup}

We begin with a brief overview of some basic definitions from Riemmanian geometry and optimization, solely intended to set notation and terminology;
our presentation roughly follows the masterful account of \citet{Lee97,Lee03}, to which we refer the reader for a comprehensive introduction to the topic.

Le $\points$ be a $\vdim$-dimensional, geodesically complete Riemannian manifold.
Throughout the sequel, the tangent space to $\points$ at a point $\point\in\points$ will be denoted by $\tspace[\point]$, and we will write $\dot\curve(\ctime) \in \tspace[\curve(\ctime)]$ for the velocity vector to a smooth curve $\curve \from \R \to \points$ at time $\ctime\in\R$.
We will also write $\inner{\cdot}{\cdot}_{\point}$ for the metric at $\point\in\points$, $\norm{\cdot}_{\point}$ for the associated norm, and $\dist(\cdot,\cdot)$ for the induced distance function on $\points$, the latter being defined via the minimization of the length functional $\mathcal{L}[\curve] = \int \norm{\dot\curve(\ctime)}_{\curve(\ctime)} \dd\ctime$.

Given a point $\point\in\points$ and a tangent vector $\vvec\in\tspace[\point]$, the (necessarily unique) geodesic emanating from $\point$ along $\vvec$ will be denoted by $\curve_{\vvec}$, and we define the exponential map at $\point$ as $\exp_{\point}(\vvec) = \curve_{\vvec}(1)$ for all $\vvec\in\tspace[\point]$ (recall here that $\points$ is assumed complete, so this map is well-defined for all $\point\in\points$ and all $\vvec\in\tspace[\point]$).
Whenever well-defined, the inverse of $\exp_{\point}$ will be written as $\log_{\point}\from\points\to\tspace[\point]$, with the understanding that the domain of $\log_{\point}$ is actually the largest neighborhood of $\point\in\points$ on which the restriction of $\exp_{\point}$ is a (global) diffeomorphism;
by definition, we have $\log_{\point}(\exp_{\point}(\vvec)) = \vvec$ for all $\vvec$ for which the relevant quantities are well-defined.
Finally, given a pair of points $\point, \pointalt \in \points$ and a tangent vector $\vvec\in\tspace[\point]$, we will write $\ptrans{\point}{\point'}{\vvec}$ for the vector obtained by parallel transporting $\vvec$ along any minimizing geodesic connecting $\point$ and $\point'$.

In this general context, we will be interested in solving the Riemannian optimization problem
\begin{equation}
\label{eq:opt}
\tag{Opt}
\mathrm{minimize}_{\point\in\points} \obj(\point)
\end{equation}
for some smooth \emph{objective function} $\obj\from\points\to\R$.
~We will also respectively write
\begin{equation}
\vecfield(\point)
	\defeq -\rgrad\obj(\point)
	\qquad
	\text{and}
	\qquad
\hmat(\point)
	\defeq \Hess(\obj(\point))
\end{equation}
for the \emph{negative \textpar{Riemannian} gradient} and the \emph{\textpar{Riemannian} Hessian} of $\obj$ at $\point$.
Finally, in terms of solutions of \eqref{eq:opt}, we will focus on the avoidance of \emph{strict saddle points} of $\obj$, \ie points $\saddle\in\points$ for which
\begin{equation}
\label{eq:strictsaddle}
\vecfield(\saddle)
	= 0
	\quad
	\text{and}
	\quad
\eig_{\min}(\hmat(\saddle))
	< 0
\end{equation}
where $\eig_{\min}$ denotes the minimum eigenvalue of the tensor in question.
We will also say that a smooth compact component (in the sense of manifolds) of critical points of $\obj$ is a \emph{strict saddle manifold} if there exist constants $\const_{\pm}>0$ such that all negative eigenvalues of $\hmat(\saddle)$, $\saddle\in\set$, are bounded from above by $-\const_{-}<0$, and any positive eigenvalues (if they exist) are bounded from below by $\const_{+} > 0$.

To differentiate the above from the Euclidean setting, when $\points$ is a real space equipped with the Euclidean metric, we will instead write $\nabla\obj$ and $\nabla^{2}\obj$ for the (ordinary) gradient and Hessian matrix of $\obj$.
In this case, as is customary, we will not distinguish between primal and dual vectors.

\section{Core algorithmic framework}
\label{sec:algorithms}

For generality, our avoidance analysis will be carried out in an abstract stochastic approximation framework which includes several popular Riemannian optimization algorithms \textendash\ from ordinary Riemannian (stochastic) gradient descent, to its retraction-based variants, optimistic methods, etc.
For concreteness, we start with the general template below, and we present a (nonexhaustive!) series of representative examples right after.

\subsection{The \acl{RRM} template}

The \acdef{RRM} framework that we will consider for solving \eqref{eq:opt} is an iterative family of methods which directly extends the seminal stochastic approximation scheme of \citet{RM51} to a manifold setting by replacing vector addition with the Riemannian exponential.
Roughly following \cite{KHMK22}, we will focus on the abstract update rule
\begin{equation}
\label{eq:RRM}
\tag{RRM}
\next
	= \expof{\curr}{\curr[\step]\curr[\signal]}
\end{equation}
where
\begin{enumerate}
\item
$\curr \in \points$ denotes the state of the algorithm at each iteration $\run = \running$
\item
$\curr[\signal]\in\tspace[\curr]$ is a surrogate for the (negative) gradient $\vecfield(\curr)$ of $\obj$ at $\curr$ (defined in detail below).
\item
$\curr[\step] > 0$ is the method's step-size (discussed in detail in \cref{sec:results}).
\end{enumerate}

In the above, the defining element of \eqref{eq:RRM} is the sequence of ``surrogate gradients'' $\curr[\signal]$, $\run=\running$, so this will be our first object of interest.
Formally, letting $\curr[\filter]$ denote the history of $\curr$ up to stage $\run$ (inclusive), we will write
\begin{equation}
\label{eq:signal}
\curr[\signal]
	\defeq \vecfield(\curr)
		+ \curr[\noise]
		+ \curr[\bias]
\end{equation}
where we have defined
\begin{equation}
\label{eq:errors}
\curr[\noise]
	\defeq \curr[\signal] - \exof{\curr[\signal] \given \curr[\filter]}
	\qquad
	\text{and}
	\qquad
\curr[\bias] \defeq \exof{\curr[\signal] \given \curr[\filter]} - \vecfield(\curr),
	\qquad
\end{equation}
as the \emph{random error} and the \emph{offset} of $\curr[\signal]$ relative to $\vecfield(\curr)$ respectively.
It will also be convenient to introduce the \emph{total error} $\curr[\error] = \curr[\signal] - \vecfield(\curr) = \curr[\noise] + \curr[\bias]$, which captures both random and systematic fluctuations in $\curr[\signal]$, and which measures the total deviation of $\curr[\signal]$ from $\vecfield(\curr)$.

Two points are worth noting here:
First, $\curr[\signal]$ is \emph{not} adapted to $\curr[\filter]$, so $\curr[\noise]$ is random relative to $\curr[\filter]$;
on the other hand, $\curr[\bias]$ is $\curr[\filter]$-measurable, so it is deterministic relative to $\curr[\filter]$.
This brings us to the second important point regarding $\curr[\signal]$:
given the systematic offset term $\curr[\bias]$ in $\curr[\signal]$, the latter should \emph{not be seen} as the output of a gradient oracle for $\vecfield(\curr)$.
In particular, $\curr[\bias]$ is intended to capture possible corrective terms, deviations from the exponential mapping, different algorithmic update structures (such as optimism), etc.
We make this distinction precise below.

\subsection{Specific algorithms and examples}
\label{sec:examples}

In the series of examples that follow, we will assume that the optimizer can access $\obj$ via a \acdef{SFO} returning noisy gradients of $\obj$ at the evaluation point.
Formally, following \citet{Nes04}, an \ac{SFO} is a black-box mechanism which, when queried at $\point\in\points$, returns a (negative) stochastic gradient of the form
\begin{equation}
\label{eq:SFO}
\tag{SFO}
\orcl(\point;\seed)
	= \vecfield(\point)
		+ \err(\point;\seed)
\end{equation}
where the \emph{seed} $\seed\in\seeds$ is a random variable taking values in some complete probability space $\seeds$, and $\err(\point;\seed)$ is an umbrella error term capturing all sources of uncertainty in the model.

The archetypal example of an \ac{SFO} occurs when $\obj$ is itself a stochastic expectation of the form $\obj(\point) = \exof{\sobj(\point;\seed)}$ for some random function $\sobj \from \points\times\seeds\to\R$ \textendash\ the so-called \emph{stochastic optimization} framework.
In this case, $\orcl$ is typically given by $\orcl(\point;\seed) = -\rgrad_{\point} \sobj(\point;\seed)$,
so, under standard assumptions for exchanging differentiation and expectation, we have $\exof{\orcl(\point;\seed)} = \vecfield(\point)$.
Extrapolating from this basic framework, our only assumption for the moment will be that $\exof{\err(\point;\seed)} = 0$;
for a detailed discussion of the required assumptions for \eqref{eq:SFO}, see \cref{sec:results}.

In practice, \eqref{eq:SFO} will be queried repeatedly at a sequence of states $\curr$, $\run=\running$, with a different random seed $\curr[\seed]$ drawn \acs{iid} from $\seeds$.
In this manner, we obtain the following specific algorithms as special cases of \eqref{eq:RRM}:

\smallskip
\begin{algo}
[\Acl{RSGD}]
\label{alg:RSGD}
Following \citet{Bon13}, the \acdef{RSGD} algorithm queries \eqref{eq:SFO} at $\curr$ and proceeds as
\begin{equation}
\label{eq:RSGD}
\tag{\acs{RSGD}}
\next
	= \exp_{\curr} \left( \curr[\step] \orcl(\curr;\curr[\seed]) \right).
\end{equation}
As such, \eqref{eq:RSGD} can be seen as an \ac{RRM} scheme with $\curr[\signal] = \orcl(\curr;\curr[\seed])$ or, equivalently, $\curr[\noise] = \err(\curr;\curr[\seed])$ and $\curr[\bias] = 0$.
\endenv
\end{algo}

A key factor limiting the applicability of \eqref{eq:RSGD} is that the exponential map $\expof{\curr}{\cdot}$ could be prohibitively expensive to compute in practice, even for relatively low-dimensional manifolds.
On that account, a popular alternative to \eqref{eq:RSGD} is to employ a \emph{retraction map} \citep{AMS08,Bou22}, that is, a smooth mapping $\retr \from \tbundle \to \points$ that agrees with the exponential map up to first order, namely
\begin{equation}
\label{eq:retraction-def}
\tag{Rtr}
\retrof{\point}{0}
	= \point
	\qquad
	\text{and}
	\qquad
\left.\ddt\right\vert_{\ctime=0} \retrof{\point}{\ctime \vvec}
	= \vvec
	\quad
	\text{for all $(\point,\vvec)\in \tbundle$}.
\end{equation}
With this machinery in hand, we obtain the following retraction-based variant of \eqref{eq:RSGD}:

\smallskip
\begin{algo}
[\Acl{ReSGD}]
\label{alg:retraction}
By replacing the exponential map in \eqref{eq:RSGD} with a retraction, we obtain the \acli{ReSGD} scheme
\acused{ReSGD}
\begin{equation}
\label{eq:ReSGD}
\tag{\acs{ReSGD}}
\next
	= \retrof{\curr}{\curr[\step] \orcl(\curr;\curr[\seed])}.
\end{equation}
This algorithm does not seem immediately related to the \ac{RRM} template \textendash\ and, indeed, the whole point of introducing a retraction was to get rid of the exponential map in \eqref{eq:RRM}.
The expressive power of \eqref{eq:RRM} can be seen in the fact that, despite this apparent disconnect, \eqref{eq:ReSGD} can still be expressed as a special case of \eqref{eq:RRM} in a fairly straightforward fashion.

To do so, define the ``forward-backward'' gradient mapping
\begin{equation}
\label{eq:signal-rtr}
\curr[\signal]
	\defeq \frac{1}{\curr[\step]}
		\logof{\curr}{\retrof{\curr}{\curr[\step] \orcl(\curr;\curr[\seed])}}
\end{equation}
with the proviso that the Riemannian logarithm in \eqref{eq:signal-rtr} is well-defined (we discuss the conditions under which this holds later in the paper).
Under this definition, \eqref{eq:ReSGD} can be recast as a special case of \eqref{eq:RRM} by running the latter with the surrogate gradient sequence $\curr[\signal]$ of \cref{eq:signal-rtr}.
To streamline our presentation, we defer the discussion about the inherent error $\curr[\error] = \curr[\signal] - \vecfield(\curr)$ to \cref{app:examples}.
\endenv
\end{algo}

As we mentioned before, retraction-based algorithms typically exhibit significantly lower per-iteration complexity compared to geodesic methods, resulting in their remarkable success in practical machine learning applications \cite{AMS08, Bou22}.
In addition, as we show below, the use of a retraction mapping allows us to provide a unified perspective for several classical algorithms which, at first sight, might seem completely unrelated.
An important example is provided by the (stochastic) \acdef{MD} family of algorithms \cite{NY83}:

\smallskip
\begin{algo}
[\Acl{SMD}]
\label{alg:SMD}
Let $\points$ be an open convex subset of $\vecspace$ and let $\hreg\from\points\to\R$ be a $C^2$-smooth, strongly convex \emph{Legendre function} on $\points$, that is, $\norm{\nabla\hreg(\point)} \to \infty$ whenever $\point\to\bd(\points)$ \cite[\cf][Chap.~26]{Roc70}.
Then, the \acdef{SMD} algorithm unfolds as
\begin{equation}
\label{eq:SMD}
\tag{\acs{SMD}}
\next
	= \proxof{\curr}{\curr[\step]\orcl(\curr;\curr[\seed])}
\end{equation}
where
$\orcl(\curr;\curr[\seed])$ is the output of an \ac{SFO} query for $\nabla\obj(\curr)$ at $\curr$,
and
$\mprox \from \points\times\vecspace \to \points$ is the so-called \emph{prox-mapping} associated to $\hreg$ \cite{BecTeb03,JNT11,BBT17,AIMM22}, viz.
\begin{equation}
\label{eq:mprox}
\proxof{\point}{\dpoint}
	= \argmax\nolimits_{\pointalt\in\points}
		\braces{\braket{\nabla\hreg(\point) + \dpoint}{\pointalt} - \hreg(\pointalt)}
	\qquad
	\text{for all $\point\in\points$, $\dpoint\in\vecspace$}.
\end{equation}
where $\braket{\cdot}{\cdot}$ stands for the ordinary Euclidean inner product in $\vecspace$.

Now, even though the notation in \eqref{eq:SMD} is reminiscent of \eqref{eq:ReSGD}, the definition \eqref{eq:mprox} of $\mprox$ does not bear any resemblance to a geodesic exponential or a retraction \textendash\ and, indeed, its origins are starkly different.
However, as we show below, $\mprox$ can indeed be seen as a retraction relative to a specific Riemannian structure on $\points$, the \acdef{HR} metric associated to $\hreg$ \cite{Dui01,ABB04,MerSan18,BMSS19}. 

To make this precise, the first step is to note that the basic recursive structure $\new = \proxof{\point}{\dpoint}$ of \eqref{eq:SMD} can be rewritten as
\begin{equation}
\label{eq:Breg-prox}
\new
	= \proxof{\point}{\dpoint}
	= \nabla\hconj(\nabla\hreg(\point) + \dpoint)
\end{equation}
where $\hconj(\dpoint) = \max_{\point\in\points} \braces{\braket{\dpoint}{\point} - \hreg(\point)}$ denotes the convex conjugate of $\hreg$, and we have used Danskin's theorem \cite{SDR09} to write $\argmax_{\point\in\points} \braces{\braket{\dpoint}{\point} - \hreg(\point)} = \nabla\hconj(\dpoint)$.
Then, if we endow $\points$ with the \acl{HR} metric $\gmat(\point) = \nabla^{2}\hreg(\point)$, the Riemannian gradient of $\obj$ relative to $\gmat$ becomes $\rgrad\obj(\point) = \bracks{\nabla^{2}\hreg(\point)}^{-1} \nabla\obj(\point)$;
more generally, given a cotangent (dual) vector $\dpoint$ to $\points$ at $\point$, the corresponding tangent (primal) vector will be $\vvec = \gmat(\point)^{-1}\dpoint = \bracks{\nabla^{2}\hreg(\point)}^{-1} \dpoint$.
In view of this, by inverting the relation $\vvec = \gmat(\point)^{-1} \dpoint$, the abstract \acl{MD} recursion \eqref{eq:Breg-prox} can be rewritten as
\begin{equation}
\new
	= \retrof{\point}{\vvec}
	\defeq \proxof{\point}{\gmat(\point) \vvec}.
\end{equation}

Now, to proceed, consider the curve
\begin{equation}
\curve(\ctime)
	= \retrof{\point}{\ctime\vvec}
	= \proxof{\point}{\ctime \gmat(\point) \vvec}
	= \nabla\hconj(\nabla\hreg(\point) + \ctime \gmat(\point) \vvec),
\end{equation}
so, by definition, $\curve(0) = \point$.
In addition, by a direct differentiation, we readily obtain
\begin{equation}
\dot\curve(0)
	= \nabla^{2}\hconj(\nabla\hreg(\point)) \gmat(\point) \vvec
	= \vvec
\end{equation}
where we used the standard identity $\nabla^{2}\hconj(\nabla\hreg(\point)) = \bracks{\nabla^{2}\hreg(\point)}^{-1}$ \cite{BecTeb03,RW98}.
This shows that the map $\retrof{\point}{\vvec} = \proxof{\point}{\gmat(\point)\vvec}$ is, in fact, a \emph{retraction}, so \eqref{eq:SMD} is a special case of \eqref{eq:ReSGD} \textendash\  and hence, of the general stochastic approximation template \eqref{eq:RRM}.
\endenv
\end{algo}

\begin{remark*}
Even though elements of the above ideas are implicit in previous works on \acl{MD} and \acl{HR} metrics \cite{ABB04,raskutti2015information,BMSS19,ABM20,ZMBB+20}, to the best of our knowledge, this is the first time that \eqref{eq:SMD} is formalized as a retraction-based (Hessian) Riemannian scheme.
\endenv
\end{remark*}

\smallskip
\begin{algo}[\Acl{ROG}]
\label{alg:ROG}
Moving forward, an important algorithm for solving online optimization problems and games is the so-called \acl{OG} method \textendash\ originally pioneered by \citet{Pop80} and subsequently popularized by \citet{RS13-COLT}.
In the Euclidean case, this method introduces an interim, ``optimistic'' correction to \acl{GD} and updates as
\begin{equation}
\label{eq:OG}
\tag{\acs{OG}}
\begin{alignedat}{2}
\lead
	&= \curr + \curr[\step] \orcl(\prelead;\prev[\seed])
	\\
\next
	&= \curr + \curr[\step] \orcl(\lead;\curr[\seed])
\end{alignedat}
\end{equation}
where, as usual, $\orcl$ is an \ac{SFO} for the (negative) gradient $\nabla\obj$ of $\obj$.
This idea can then be directly transported to a manifold setting \cite{KHMK22},
leading to the \acli{ROG} method
\acused{ROG}
\begin{equation}
\label{eq:ROG}
\tag{ROG}
\begin{aligned}
\lead
	&= \expof{\curr}{\curr[\step] \orcl(\prelead;{\prev[\seed]})},
	\\
\next
	&= \expof{\curr}{\ptrans{\lead}{\curr}{\curr[\step] \orcl(\lead;\curr[\seed])}}.
\end{aligned}
\end{equation}
Importantly, the recursion \eqref{eq:ROG} may be seen as a special case of \eqref{eq:RRM} by setting $\curr[\signal] = (1/\curr[\step]) \cdot \ptrans{\lead}{\curr}{\curr[\step] \orcl(\lead;\curr[\seed])}$
or, equivalently
$\curr[\noise] = \ptrans{\lead}{\curr}{\err(\lead;\curr[\seed])}$
and
$\curr[\bias] = \ptrans{\lead}{\curr}{\vecfield(\lead)}-\vecfield(\curr)$.
We defer the calculation details to \cref{app:examples}.
\endenv
\end{algo}

\smallskip
\begin{algo}
[\Acl{NGD}]
\label{alg:NGD}
Our last example concerns the influential \acdef{NGD} method of \citet{amari1998natural}, a stochastic optimization scheme for Euclidean spaces, but adapted to the local geometry defined by a strictly convex function $\hreg$.
Specifically, \ac{NGD} queries an \ac{SFO} and proceeds as
\begin{equation}
\label{eq:NGD}
\tag{NGD}
\next
	= \curr - \curr[\step] (\rgrad\obj(\curr) + \err(\curr;\curr[\seed]))
\end{equation}
where $\rgrad\obj(\point) \defeq \bracks{\nabla^{2}\hreg(\point)}^{-1} \nabla\obj(\point)$ denotes the Riemannian gradient of $\obj$ relative to \acl{HR} metric $\gmat(\point) = \nabla\hreg^{2}(\point)$ on $\vecspace$.
It is well known that \eqref{eq:NGD} can be seen as a retraction-based Riemannian scheme \cite{Bon13}, and may thus be integrated directly within the framework of \eqref{eq:RRM};
we defer the details to \cref{app:examples}.
Importantly, \eqref{eq:NGD} also includes the celebrated \acli{NPG} \cite{kakade2001natural} which plays an important role in reinforcement learning.
\endenv
\end{algo}

The above examples have been chosen to illustrate a range of different update mechanisms that can be integrated within the general algorithmic template provided by \eqref{eq:RRM}.
Of course, it is not possible to be exhaustive but, for illustration purposes, we provide some more examples in \cref{app:examples}.

\section{Analysis and results}
\label{sec:results}

We are now in a position to state and discuss our main result concerning the avoidance of saddle points under \eqref{eq:RRM}.
For concreteness, we begin by discussing the technical assumptions that we will need in \cref{sec:assumptions};
subsequently, we proceed with the formal statement of our result and some direct applications thereof in \cref{sec:avoid}.

\subsection{Technical assumptions}
\label{sec:assumptions}

Our technical assumptions concern the four main ingredients of \eqref{eq:RRM}, namely
\begin{enumerate*}
[(\itshape i\hspace*{.5pt}\upshape)]
\item 
the regularity of $\obj$;
\item
the method's step-size sequence $\curr[\step]$;
\item
the statistics of the surrogate gradients $\curr[\signal]$ entering \eqref{eq:RRM};
and
\item
the ambient manifold $\points$.
\end{enumerate*}
Specifically, we will require the following:

\begin{assumption}
[Regularity of $\obj$]
\label{asm:regularity}
The function $\obj$ is $C^{2}$ and $\vecfield = -\rgrad \obj$ is (geodesically) \emph{$\lips$-Lipschitz}, \ie for all $\point,\pointalt\in\points$, we have
\begin{equation}
\norm{ \ptrans{\point}{\point'}{\vecfield(\point)} - \vecfield(\point')  }_{\point'}
	= \norm{ \vecfield(\point)  - \ptrans{\point'}{\point}{\vecfield(\point')}}_{\point }
	\leq \lips \dist(\point, \point').
\end{equation}
\end{assumption}

\begin{assumption}
[Step-size schedule]
\label{asm:step}
The step-size sequence $\curr[\step]$ of \eqref{eq:RRM} satisfies
\begin{equation}
\label{eq:step-req}
\sum\nolimits_{\run=\start}^{\infty} \curr[\step] = \infty
	\quad
	\text{and}
	\quad
\sum\nolimits_{\run=\start}^{\infty} \lambda^{1/\curr[\step]} < \infty
	\quad
	\text{for all $\lambda\in(0,1)$}.
\end{equation}
\end{assumption}
\smallskip

\begin{assumption}
[Surrogate gradients]
\label{asm:signal}
The offset and random error components of $\curr[\signal]$ satisfy
\begin{equation}
\label{eq:signal-req}
\norm{\curr[\bias]}_{\ssstyle\curr}
	\leq \bcoef \curr[\step],
	\qquad
\norm{\curr[\noise]}_{\ssstyle\curr}
	\leq \sdev,
	\qquad
\exof{\pospart{\inner{\curr[\noise]}{\vvec}_{\ssstyle\curr}} \given \curr[\filter]}
	\geq \lbound
\end{equation}
for suitable constants $\bcoef,\sdev,\lbound > 0$
and
for all $\vvec\in\tspace[\curr]$, $\norm{\vvec}_{\curr} = 1$
(in the above, all conditions are to be interpreted in the almost sure sense and $\pospart{t} = \max\{0,z\}$ denotes the positive part of $t$).
\end{assumption}
\smallskip

\begin{assumption}
[Injectivity radius]
\label{asm:radius}
The injectivity radius of $\points$ is bounded from below by $\injradius >0$.
\end{assumption}
\smallskip

Before proceeding, we discuss the implications and range of validity of each of the above assumptions. Since \Cref{asm:regularity} is standard, we focus on the remaining three below:


\para{On \Cref*{asm:step}}
The step-size conditions typically encountered in the analysis of \acl{RM} schemes is the $L^{2} - L^{1}$ (``\emph{square-summabe-but-not-summable}'') condition $\sum_{\run} \curr[\step] = \infty$, $\sum_{\run} \curr[\step]^{2} < \infty$, \cf \cite{RM51,KC78,BMP90,Ben99,BT00,Bon13} and references therein.
This puts a hard threshold on the range of allowed step-size schedules at $\Omega(1/\run^{1/2})$:
any step-size that decays at least as slow as $1/\run^{1/2}$ cannot be used under the $L^{2}-L^{1}$ assumption.
By contrast, the step-size condition \eqref{eq:step-req} is considerably more lax and can tolerate near-constant step-sizes of the form $\curr[\step] \propto 1/(\log\run)^{1+\eps}$ for some $\eps>0$.
This is enough to cover all dereasing step-size policies used in practice.
[We also recall here that, in stochastic non-convex settings, trajectory convergence cannot be guaranteed in general without a vanishing step-size, \cf \cite{KY97,Ben99,Bor08} and references therein.]


\para{On \Cref*{asm:signal}}
Three remarks are in order for the noise and offset requirements \eqref{eq:signal-req}.
First, we should note that the condition $\curr[\bias] = \bigoh(\curr[\step])$ is, a priori, \emph{implicit}, because it depends on the statistics of the feedback sequence $\curr[\signal]$, and these may be difficult to estimate in general.
However, in most practical applications, this quantity is under the \emph{explicit} control of the optimizer:
in particular, as we show later in this section, this requirement is satisfied by all the specific algorithms of \cref{sec:examples}.

Likewise, the bounded noise requirement is satisfied in many practical cases of interest.
For example, when the problem's objective function admits a finite-sum decomposition of the form $\obj(\point) = \sum_{i=1}^{N} \obj_i(\point)$ for an ensemble of empirical instances $\obj_{i}$, $i=1,\dotsc,N$ (the standard framework for applications to data science and machine learning), $\curr[\noise]$ is typically generated by sampling a minibatch of $\obj$, which in turn results in an error term of the form $\curr[\noise] = q(\curr)$ where $q(\point): \points \to \tspace[\point]$ is bounded on all compact subsets of $\points$.
Therefore, $\norm{\curr[\noise]}_{\ssstyle\curr} \leq \norm{q(\curr)}_{\ssstyle\curr} < \sdev$ for some constant $\sdev$ for any convergent algorithm $\{\curr\}_\run$.

Finally, the ``uniform excitability'' condition $\exof{\pospart{\inner{\curr[\noise]}{\vvec}_{\ssstyle\curr}} \given \curr[\filter]} \geq \lbound$ is also standard in the avoidance literature \cite{Pem90,Ben99}, and it is substantially weaker than the \emph{isotropic} condition, which, roughly speaking, requires the noise to have the same $L^2$ magnitude along all directions in space \cite{Pem90, GHJY15,jin2017escape}.
Instead, \eqref{eq:signal-req} only posits that the noise $\curr[\noise]$ has a \emph{non-zero} component along each direction, and imposes no other restrictions on the statistical profile of the noise.


\para{On \Cref*{asm:radius}}
For our last assumption, recall first that the injectivity radius of $\points$ at a point $\base\in\points$ is the largest radius for which $\exp_{\base}$ is a diffeomorphism onto its image;
the injectivity radius of $\points$ is then taken to be the infimum over all such radii \citep{Lee97}.
In this regard, \cref{asm:radius} simply serves to ensure that the exponential map is invertible at consecutive iterates of \eqref{eq:RRM} so no local topological complications can arise.
This assumption is automatically satisfied in closed manifolds (independent of curvature), as well as in non-positively curved manifolds \textendash\ such as Cartan-Hadamard spaces and the like \cite{Lee97,Lee03}.
This assumption (and its variants) is also standard in the literature, \cf \cite{Bon13,karimi2022dynamics,sun2019escaping} and references therein.

\subsection{Avoidance of saddle points}
\label{sec:avoid}

We are now in a position to state our main avoidance result:

\begin{restatable}{theorem}{avoid}
\label{thm:avoid}
Let $\curr$, $\run=\running$, be the sequence of states generated by \eqref{eq:RRM}, and let $\set$ be a strict saddle manifold of $\obj$.
Then, under \cref{asm:regularity,asm:step,asm:signal,asm:radius}, we have
\begin{equation}
\label{eq:avoid}
\probof{\text{$\dist(\set,\curr) \to 0$ as $\run\to\infty$}}
	= 0
\end{equation}
where $\dist(\set,\curr) = \inf_{\point\in\set} \dist(\point,\curr)$ denotes the \textpar{Riemannian} distance of $\curr$ from $\set$.
\end{restatable}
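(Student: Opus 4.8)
My plan is to reduce \eqref{eq:avoid} to a purely local non-convergence statement near $\set$, move to a tubular chart in which $\set$ is normally hyperbolic for the flow of $\vecfield$, and then combine three ingredients in the spirit of \cite{Pem90,BD96,BH96}: the asymptotic-pseudotrajectory property of \eqref{eq:RRM} relative to that flow; the center--stable manifold theorem \citep{Shu87} for $\set$; and the ``uniform excitability'' of the noise from \cref{asm:signal}, which precludes the iterates from being funnelled onto a set of positive codimension. For the localization step, note that on the event $\{\dist(\set,\curr)\to0\}$ the iterates eventually enter and stay in any prescribed neighborhood of $\set$, and $\set$ is compact; hence it suffices to produce, for each $\saddle\in\set$, an open neighborhood $\nhd$ of $\saddle$ with $\probof{\dist(\set,\curr)\to0 \text{ and } \curr\in\nhd \text{ for all large }\run}=0$. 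Fixing $\saddle$ and passing to Fermi coordinates via the normal exponential map of $\set$, we identify a tube around $\saddle$ with a neighborhood of the zero section of the normal bundle $N\set$, and split $N\set=E^{u}\dsum E^{s}$ by the sign of the Hessian: by the definition of a strict saddle manifold, $\hmat(\saddle)\mleq-\const_{-}\eye$ on $E^{u}$ and $\hmat(\saddle)\mgeq\const_{+}\eye$ on $E^{s}$, uniformly over $\saddle\in\set$, with $\rank E^{u}\geq1$. In this chart $\vecfield$ vanishes on $\set$, its normal linearization is $-\hmat$, the $E^{u}$-directions are uniformly expanding for the flow of $\vecfield$, and the $T\set$-directions are central.

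Next, by \cref{asm:regularity,asm:step,asm:signal,asm:radius}, the techniques of \cite{KHMK22} show that the continuous-time interpolation of $\{\curr\}$ is a Riemannian asymptotic pseudotrajectory of the flow $\flowmap$ of $\vecfield$. Applying the center--stable manifold theorem \citep{Shu87} to $\set$ with its uniform normal hyperbolicity furnishes a locally invariant $C^{1}$ submanifold $W^{\mathrm{cs}}\supseteq\set$, of codimension $\rank E^{u}\geq1$ and tangent along $\set$ to $T\set\dsum E^{s}$, together with an adapted $C^{1}$ frame for the splitting $N\set=E^{u}\dsum E^{s}$; moreover, since $\set$ is an isolated smooth compact component of critical points of $\obj$, every forward $\flowmap$-orbit that remains in $\nhd$ converges to $\set$ and lies in $W^{\mathrm{cs}}$. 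Combining these facts with the pseudotrajectory property, on the event of Step~1 the iterates shadow such an orbit with asymptotically vanishing error, so that their $E^{u}$-component in the adapted frame must tend to $0$.

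The heart of the argument is a Pemantle-type estimate on this unstable component. Let $\curr[u]\in E^{u}$ be the $E^{u}$-component of $\curr$ in the frame above; on the event of Step~1, $\curr[u]\to0$. Expanding \eqref{eq:RRM} by a Taylor expansion of the exponential and using \cref{eq:signal,eq:errors}, one gets, for $\curr$ in a sufficiently small $\nhd$,
\[
\next[u]
	= (\eye+\curr[\step]\Lambda_{\run})\,\curr[u] + \curr[\step]\,\curr[\noise]' + \curr[\step]\,\curr[\bias]' + R_{\run},
\]
where $\Lambda_{\run}\mgeq(\const_{-}/2)\eye$ on the expanding factor (having absorbed the $\bigoh(\norm{\curr[u]}+\dist(\set,\curr))$ corrections and the $E^{u}$--$E^{s}$ couplings into $\Lambda_{\run}$), $\curr[\noise]'$ and $\curr[\bias]'$ are the $E^{u}$-projections of the random error and offset, and $\norm{R_{\run}}=\bigoh(\curr[\step]^{2})$. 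By \cref{asm:signal}, $\norm{\curr[\bias]'}\leq\bcoef\curr[\step]$ (so the offset term is also $\bigoh(\curr[\step]^{2})$), $\norm{\curr[\noise]'}\leq\sdev$, and $\exof{\pospart{\inner{\curr[\noise]'}{\vvec}}\given\curr[\filter]}\geq\lbound$ for every unit $\vvec\in E^{u}$. Suppose, toward a contradiction, that the event of Step~1 has positive probability, so that on a positive-probability set $\curr[u]\to0$ while $\curr$ stays in $\nhd$ from some stage $m$ on. Because $\sum_{\run}\curr[\step]=\infty$, the cumulative linear gain $\prod_{\runalt=m}^{\run}(1+(\const_{-}/2)\curr[\step])\to\infty$, whereas the martingale component of $\curr[\noise]'$ injects, at every stage, a conditionally non-degenerate increment of order $\curr[\step]$ along $E^{u}$ that the $\bigoh(\curr[\step]^{2})$ offset and remainder cannot cancel. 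A second-moment / Borel--Cantelli estimate along the lines of \cite{Pem90,BD96} then forces $\limsup_{\run}\norm{\curr[u]}>0$ almost surely on that set, contradicting $\curr[u]\to0$. Hence the event of Step~1 has probability $0$, and summing over a finite cover of $\set$ yields \eqref{eq:avoid}.

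The part I expect to be the main obstacle is executing the two preceding steps \emph{uniformly over a submanifold} of critical points rather than at a single hyperbolic equilibrium: one must build the $C^{1}$ splitting $N\set=E^{u}\dsum E^{s}$ and an adapted parallel frame along $\set$, control the nonlinear and curvature/parallel-transport corrections inside the tube with constants uniform near $\set$ (so that the cross-terms coupling $E^{u}$ and $E^{s}$ remain dominated by the injected noise after shrinking $\nhd$), and --- most critically --- establish the Riemannian asymptotic-pseudotrajectory/shadowing estimates with error terms that are uniform in a neighborhood of $\set$; this last point is exactly where the machinery of \cite{KHMK22} is indispensable. A secondary technical check is that the relaxed step-size schedule of \cref{asm:step}, which is weaker than the classical $L^{2}$--$L^{1}$ condition, still supplies the martingale tail bounds required in the Pemantle-type estimate.
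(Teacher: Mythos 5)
Your high-level strategy --- asymptotic pseudotrajectory shadowing, the center--stable manifold theorem, and a Pemantle-style noise-vs-expansion argument --- is the same as the paper's, but your technical route through it is genuinely different, and as sketched it leaves two substantive gaps.

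On the route: the paper does \emph{not} work in Fermi coordinates around $\set$. It instead isometrically embeds $\points$ into $\R^\edim$ via the Nash embedding theorem, extends $\vecfield$ to a Lipschitz field on the ambient Euclidean space by Tietze, and then proves the Euclidean statement for the pushforward iterates, which are a Robbins--Monro scheme \emph{plus a controllable geodesic offset} $\curr[\er]=\offset{\curr}{\curr[\step]\curr[\signal]}=\bigoh(\curr[\step]^2)$. Crucially, rather than tracking the vector-valued unstable component $\curr[u]$, the paper invokes the Brandière--Hirsch / Benaïm local Lyapunov function $\Euc[\eta]$ (Proposition~9.5 of Benaïm), which reduces the entire unstable dynamics to a scalar process $\curr[S]=\Euc[\eta](\Ecurr)$ and gives the key inequalities $\Jac\Euc[\eta]\,\Euc \geq \beta\Euc[\eta]$ and $\norm{\Pi_{\Epoints}\Jac\Euc[\eta]}\geq c_1$ \emph{without} ever constructing a $C^1$ adapted frame for $E^u\dsum E^s$ or bounding $E^u$--$E^s$ coupling terms. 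Your proposal trades that scalar reduction for a direct linearized recursion $\next[u]=(\eye+\curr[\step]\Lambda_\run)\curr[u]+\cdots$ and absorbs the cross-couplings into $\Lambda_\run$; this is the step you flag as the ``main obstacle'', and rightly so --- the splitting is in general only continuous in the base point, the coupling terms are of the same order as the injected noise, and establishing that $\Lambda_\run\mgeq(\const_-/2)\eye$ uniformly after absorption is not obviously possible. The Lyapunov-function approach exists precisely to sidestep this.

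The second gap is in the probabilistic endgame. You reduce to ``a second-moment / Borel--Cantelli estimate along the lines of \cite{Pem90,BD96} forces $\limsup_\run\norm{\curr[u]}>0$'', but this is where the heavy lifting lives, and the paper does not get it for free. It verifies four explicit premises of a Pemantle lemma for the scalar process $\curr[S]$: a one-sided drift $\one_{\{S_\run>\eps_\run\}}\ex[Y_{\run+1}\mid\curr[\filter]]\geq0$ obtained from the Lyapunov inequality, a lower variance bound $\ex[Y_{\run+1}^2\mid\curr[\filter]]\geq a_1\curr[\step]^2$ obtained from the uniform excitability of $\curr[\noise]$ \emph{projected through $\Jac\Euc[\eta]$ and $\Pi_{\Epoints}$}, and size bounds $\abs{Y_\run}=o(\sqrt{\alpha_\run})$. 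You cannot reproduce the variance lower bound without something playing the role of $\Jac\Euc[\eta]$, because excitability in $\tspace[\curr]$ must be transferred to excitability in the direction that $\eta$ ``sees'' --- this is precisely where the paper needs the extra observation that $\umfd$ meets $\Etspace[\Epoint]$, and the modified form $\norm{\Pi_{\Epoints}\Jac\Euc[\eta]}\geq c_1$ of Benaïm's estimate. Moreover, Assumption~2 explicitly permits non-square-summable step-sizes, which you flag as ``a secondary technical check'': the classical Pemantle lemma \emph{assumes} $\curr[\step]=o(\sqrt{\sum_{\runalt\geq\run}\step_\runalt^2})$, which fails if $\sum_\run\curr[\step]^2=\infty$. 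The paper splits into two cases and in the non-square-summable case invokes a different (Brandière--Hirsch) lemma whose hypotheses are incompatible with the square-summable one. Your proposal does not make this case split, so as written it only covers the square-summable regime.

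In short: the localization, Fermi-coordinate chart, and $E^u$-projection plan is a legitimate alternative scaffold, and the Riemannian APT input from \cite{KHMK22} is correctly identified as essential. What is missing is (i) a scalar Lyapunov functional or an equivalent device to make the drift/variance estimates rigorous without having to control the $E^u$--$E^s$ cross-terms by hand, and (ii) the case distinction on $\sum_\run\curr[\step]^2$, which is not a cosmetic detail under Assumption~2.
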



Before discussing the proof of \cref{thm:avoid}, it is worthwhile to compare our work with its closest antecedents. First, in regard to the general avoidance theory
in Euclidean spaces \cite{BH96,Ben99}, the statement is similar in scope (avoidance of unstable manifolds \acl{wp1}), but the techniques and challenges involved are very different.
The reason for this is simple:
the additive, vector space structure of $\ambient$ is ingrained at every step of the way in the Euclidean analysis of \cite{Ben99}, and adapting the various constructions to a manifold setting can be a complicated affair.
For an illustration of the technical difficulties involved, see the recent stochastic approximation analysis of \cite{KHMK22}.

By contrast, the recent results of \cite{criscitiello2019efficiently,sun2019escaping} paint a complementary picture:
they concern Riemannian problems but, at their core, they are deterministic results.
More precisely, the noise in \cite{criscitiello2019efficiently,sun2019escaping} is actually \emph{injected} in an otherwise deterministic gradient scheme to facilitate the escape from flat regions in the vicinity of a saddle point;
 other than that, the magnitude of the noise must be proportional to the solver's desired accuracy, and hence is typically extremely small.
As a result, the analysis of \cite{criscitiello2019efficiently,sun2019escaping} cannot be extended to bona fide stochastic schemes \textendash\ like \eqref{eq:RSGD} \textendash\ which also explains why these results involve a constant step-size (as opposed to a decreasing step-size schedule, which is required to guarantee trajectory convergence in settings with persistent noise).
In this regard, \cref{thm:avoid} simultaneously complements the stochastic analysis of \cite{Ben99,BH96} to Riemannian problems, and the Riemannian analysis of \cite{criscitiello2019efficiently,sun2019escaping} to a stochastic setting.


\para{Proof outline}

To facilitate the reading of our proof, we provide below a detailed outline of the main steps and techniques involved therein, deferring the full proof to \cref{app:analysis}.
We begin with a high-level description of our proof strategy and then encode the main arguments in a series of steps right after.

For the purposes of illustration, suppose that $\points$ is a subset of $\ambient$.
Then, given a tangent vector $\vvec\in\tspace[\base]$, we define the \emph{geodesic offset} (see \cref{fig:offset}) from $\base$ along $\vvec$ as
\begin{equation}
\label{eq:offset}
\offset{\base}{\vvec}
	= \expof{\base}{\vvec}
		- \base - \vvec
\end{equation}
\ie as the difference between the geodesic emanating from $\base$ along $\vvec$ and its first-order approximation relative to $\base$ in the ambient space $\ambient$ (with all differences expressed in the ordinary vector space structure of $\ambient$).
The offset $\offset{\point}{\vvec}$ is readily checked to be second-order in $\vvec$ so, while the curve $\base + \ctime\vvec$ does not in general induce a retraction on the target manifold $\points$ (in particular, the point $\base+\ctime\vvec$ may not even \emph{belong} to $\points$), the converse \emph{is} true:
the map $\expof{\base}{\vvec}$ is always a retraction on the ambient, Euclidean space $\ambient$.
In this way, the basic iteration \eqref{eq:RRM} can be expressed as
\begin{equation}
\label{eq:RRM-Eucl}
\next
	= \expof{\curr}{\curr[\step]\curr[\signal]}
	= \curr
		+ \curr[\step]\curr[\signal]
		+ \offset{\curr}{\curr[\step]\curr[\signal]}
\end{equation}
leading to the fundamental question below:
\begin{quote}
\centering
\itshape
What is the maximum offset $\curr[\er] \defeq \offset{\curr}{\curr[\step]\curr[\signal]}$ that can be tolerated by a Euclidean stochastic approximation algorithm to avoid saddle points?
\end{quote}

A key technical step in our work is to develop the means to control the offset term $\curr[\er]$ under \eqref{eq:RRM} under a sufficiently broad class of assumptions that includes \crefrange{alg:RSGD}{alg:NGD}. Crucially, this step is made possible  thanks to the very recent \textendash\ and technical \textendash\ stochastic approximation work of \cite{KHMK22}.
To help the reader navigate our proof strategy, we outline the main steps below, focusing for simplicity on the case of a single saddle point.

\para{Step 1: From discrete to continuous time (and back)} 
Let $\saddle$ be a strict saddle point of $\obj$.
By the stable manifold theorem \cite{Shu87}, the set of all initializations such that
the Riemannian gradient flow
\begin{equation}
\label{eq:RGF}
\tag{RGF}
\dotorbit{\ctime}
	= -\rgrad\obj(\orbit{\ctime})
\end{equation}
converges to $\saddle$ is of measure 0.
~Then, assuming for the moment that the geodesic offset error $\curr[\er] = \offset{\curr}{\curr[\step]\curr[\signal]}$ in \eqref{eq:RRM-Eucl} is sufficiently small, the iterates of \eqref{eq:RRM} can be seen as a noisy, approximate Euler discretization of \eqref{eq:RGF};
as such, it is reasonable to expect that the induced trajectories of \eqref{eq:RRM} will never converge to $\saddle$.

To make this intuition precise, our first step will be to show that the iterates of \eqref{eq:RRM} comprise an \acli{APT} of \eqref{eq:RGF} in the sense of \citet{Ben99}, \ie they asymptotically track the orbits of \eqref{eq:RGF} with arbitrary precision over windows of arbitrary length.
To formalize this, define the ``effective time'' variable $\curr[\efftime] = \sum\nolimits_{\runalt=\start}^{\run-1} \iter[\step]$ and the associated \emph{geodesic interpolation} $\apt{\ctime}$ of $\curr$ as
\begin{equation}
\label{eq:interpolation}
\tag{GI}
\apt{\ctime}
	= \expof*{\curr}{(\ctime - \curr[\efftime]) \curr[\signal]}
	\quad
	\text{for all $\ctime \in [\curr[\efftime],\next[\efftime])$, $\run\geq\start$}
\end{equation}
so, by construction,
\begin{enumerate*}
[(\itshape a\upshape)]
\item
$\apt{\curr[\efftime]} = \curr$ for all $\run$;
and
\item
each segment of $\apt{\ctime}$ is a geodesic.
\end{enumerate*}
Then, letting $\flowmap\from\R_{+}\times\points\to\points$ denote the \emph{flow} of \eqref{eq:RGF} \textendash\ \ie $\flow[h][\point]$ is simply the position at time $h\geq0$ of the solution orbit of \eqref{eq:RGF} that starts at $\point \in \points$ \textendash\ we will say that $\apt{\ctime}$ is an \ac{APT} of \eqref{eq:RGF} if, for all $\horizon>0$, we have
\begin{equation}
\label{eq:APT}
\tag{APT}
\txs
\lim_{\ctime\to\infty}
	\sup_{0 \leq h \leq \horizon}
		\dist(\apt{\ctime + h} , \flow[h][\apt{\ctime}])
	= 0.
\end{equation}

This requirement is non-trivial, and our first technical result is to guarantee precisely this:

\begin{restatable}{theorem}{APT}
\label{thm:APT}
Suppose that \crefrange{asm:step}{asm:radius} hold.
Then,
\acl{wp1}, the geodesic interpolation $\apt{\ctime}$ of the sequence of iterates $\curr$, $\run=\running$, generated by \eqref{eq:RRM} is an \ac{APT} of \eqref{eq:RGF}.
\end{restatable}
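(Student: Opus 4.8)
The plan is to view \eqref{eq:RRM} as a noisy, approximate Euler discretisation of \eqref{eq:RGF} and then run a Gr\"onwall-type comparison over windows of fixed effective-time length, in the spirit of the Euclidean asymptotic-pseudotrajectory theory of \cite{Ben99,BH96}, while keeping the extra geometric error terms under control. As in the proof outline above — or, formally, after an isometric embedding of $\points$ into some ambient $\ambient$ — \eqref{eq:RRM-Eucl} together with the decomposition $\curr[\signal]=\vecfield(\curr)+\curr[\noise]+\curr[\bias]$ of \eqref{eq:signal} yields
\begin{equation*}
\next=\curr+\curr[\step]\vecfield(\curr)+\curr[\step]\curr[\noise]+\curr[\step]\curr[\bias]+\curr[\er],
\qquad
\curr[\er]=\offset{\curr}{\curr[\step]\curr[\signal]},
\end{equation*}
where the geodesic offset $\curr[\er]$ is \emph{quadratic} in $\curr[\step]\curr[\signal]$. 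Since $\sum_{\run}\curr[\step]=\infty$ forces the effective time $\curr[\efftime]$ to diverge, and the quantifier ``for all $\horizon>0$'' in \eqref{eq:APT} may be recovered by intersecting over $\horizon\in\N$, it is enough to fix $\horizon>0$ and prove that, \acl{wp1}, $\sup_{0\le h\le\horizon}\dist\bigl(\apt{\curr[\efftime]+h},\flowmap_{h}(\apt{\curr[\efftime]})\bigr)\to0$ as $\run\to\infty$, with $\apt{\ctime}$ the geodesic interpolation \eqref{eq:interpolation}; since on the compact sets visited within a single window the intrinsic distance is bi-Lipschitz equivalent to the chordal one, the estimates may all be carried out in $\ambient$.

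\textbf{Step 1 (controlling the noise).} Let $S_{\ctime}$ be the value at effective time $\ctime$ of the running sum $\sum_{\runalt}\iter[\step]\iter[\noise]$; viewed in $\ambient$ this is a martingale with respect to $\curr[\filter]$ (as $\exof{\iter[\noise]\given\iter[\filter]}=0$), with increments bounded by $\sdev\iter[\step]$ \as by \cref{asm:signal}. Over the window $[\curr[\efftime],\curr[\efftime]+\horizon]$ one has $\sum\iter[\step]^{2}=\bigoh\!\bigl(\sup_{\runalt\ge\run}\iter[\step]\bigr)$, so a vector Azuma--Hoeffding bound gives, for every $\eps>0$,
\begin{equation*}
\probof{\sup\nolimits_{0\le h\le\horizon}\norm{S_{\curr[\efftime]+h}-S_{\curr[\efftime]}}>\eps}
\le c_{1}\exp\bigl(-c_{2}\eps^{2}\,(\sup\nolimits_{\runalt\ge\run}\iter[\step])^{-1}\bigr)
\end{equation*}
with $c_{1},c_{2}>0$ depending only on $\sdev$, $\horizon$, and the ambient dimension. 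This is the point at which \cref{asm:step} is indispensable: for $\curr[\step]$ eventually non-increasing, $\sup_{\runalt\ge\run}\iter[\step]=\curr[\step]$, and applying $\sum_{\run}\lambda^{1/\curr[\step]}<\infty$ with $\lambda=e^{-c_{2}\eps^{2}}\in(0,1)$ makes the right-hand side summable in $\run$; Borel--Cantelli, invoked along a sequence $\eps_{j}\downarrow0$, then gives $\sup_{0\le h\le\horizon}\norm{S_{\curr[\efftime]+h}-S_{\curr[\efftime]}}\to0$ \as. This exponential estimate is exactly the price of relaxing the usual $L^{2}$-summability condition to \eqref{eq:step-req}.

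\textbf{Step 2 (bias, geodesic offset, and the comparison).} By \cref{asm:signal}, $\norm{\curr[\bias]}\le\bcoef\curr[\step]$, so the bias accumulated over a window is $\bigoh\!\bigl(\sup_{\runalt\ge\run}\iter[\step]\bigr)\to0$. For the geodesic offset, \cref{asm:radius} together with the standing smoothness of $\points$ furnishes a constant $c_{\Delta}$ with $\norm{\offset{\point}{\vvec}}\le c_{\Delta}\norm{\vvec}^{2}$ whenever $\norm{\vvec}\le\injradius/2$ and $\point$ stays in a prescribed compact set; since $\vecfield$ is $C^{1}$ — hence Lipschitz on compacta, say with constant $\lips$ — and the per-step perturbation is $\bigoh(\curr[\step])$ \as, a routine bootstrap confines the iterates of a single window to a fixed ball of radius $\bigoh(e^{\lips\horizon})$ once $\curr[\step]$ is small, so that $\norm{\curr[\signal]}$ is bounded there and $\sum\norm{\iter[\er]}\le c_{\Delta}\sum\iter[\step]^{2}\norm{\iter[\signal]}^{2}\to0$ over the window; the same second-order estimate controls the discrepancy between each geodesic segment of $\apt{\ctime}$ and the corresponding Euler chord. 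Writing $\curr[\delta](\horizon)$ for the sum of these three (a.s.\ vanishing) contributions and comparing $\apt{\curr[\efftime]+h}$ with the flow line $\flowmap_{h}(\apt{\curr[\efftime]})$ over the window — using the local Lipschitz continuity of $\vecfield$ and the smooth dependence of $\flowmap$ on its base point — one obtains, for $0\le h\le\horizon$,
\begin{equation*}
\dist\bigl(\apt{\curr[\efftime]+h},\flowmap_{h}(\apt{\curr[\efftime]})\bigr)
\le\curr[\delta](\horizon)+\lips\!\int_{0}^{h}\dist\bigl(\apt{\curr[\efftime]+s},\flowmap_{s}(\apt{\curr[\efftime]})\bigr)\dd s,
\end{equation*}
whence Gr\"onwall's lemma gives $\sup_{0\le h\le\horizon}\dist(\apt{\curr[\efftime]+h},\flowmap_{h}(\apt{\curr[\efftime]}))\le\curr[\delta](\horizon)\,e^{\lips\horizon}\to0$ \as; intersecting over $\horizon\in\N$ establishes \eqref{eq:APT}.

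\textbf{Main obstacle.} I expect the hard part to be geometric rather than probabilistic: making the quadratic bound $\norm{\offset{\point}{\vvec}}=\bigoh(\norm{\vvec}^{2})$ uniform over the relevant region, carrying out the comparison with the flow \emph{intrinsically} (a genuinely Riemannian treatment requires parallel-transporting the successive error vectors between tangent spaces of consecutive iterates, and using the Lipschitz dependence of $\flowmap$ on its base point), and checking that no estimate is lost either in passing through the ambient embedding or in the discrete-to-continuous interpolation \eqref{eq:interpolation}. This is precisely where the recent — and technically heavy — stochastic-approximation machinery of \cite{KHMK22} is needed. On the probabilistic side, the only genuine departure from the classical Euclidean argument is the replacement, forced by the relaxed step-size schedule of \cref{asm:step}, of the standard $L^{2}$-martingale estimate by the exponential concentration bound of Step 1.
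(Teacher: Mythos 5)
Your proposal takes essentially the same route as the paper's own proof: both reduce the claim to controlling the running noise integral over fixed-length effective-time windows, delegate the deterministic/geometric Gr\"onwall comparison to the machinery of \cite{karimi2022dynamics} (the paper cites Eq.~53 therein directly to get the window bound in terms of the bias, the step-size, and the noise supremum), and then use the almost-sure $L^{\infty}$ bound on $\curr[\noise]$ from \cref{asm:signal} to obtain an exponential tail that is rendered summable by the relaxed step-size condition of \cref{asm:step} and Borel--Cantelli. The paper implements the probabilistic step by constructing an explicit exponential supermartingale in a moving orthonormal frame, applying the maximal inequality for nonnegative supermartingales, optimizing over a free scalar, and union-bounding over the $2\vdim$ signed basis directions; your ``vector Azuma--Hoeffding'' inequality is the same estimate in different clothing. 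The only genuine discrepancies are minor: (i) you need the \emph{maximal} form of the Azuma bound over the window, which is not the standard one-point inequality but which the paper obtains for free from Doob; and (ii) you write $\sum_{\runalt}\iter[\step]^{2}$ over the window as a constant times $\sup_{\runalt\geq\run}\iter[\step]$ under an eventual-monotonicity hypothesis on the step-sizes, whereas the paper sidesteps monotonicity by a mean-value identity for the piecewise-constant interpolation $\bar\step$, which locates a single index whose step-size controls the window, and then sums the Borel--Cantelli tail over all indices. Neither point changes the substance; the plan is correct.
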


A version of \cref{thm:APT} was very recently derived by \cite{KHMK22} under a different set of assumptions:
On the one hand, \cite{KHMK22} imposes a much more restrictive step-size schedule for $\curr[\step]$ (square summability) but, on the other hand, it only posits that the noise increments $\curr[\noise]$ are bounded in $L^{2}$ (as opposed to $L^{\infty}$ in our case).
Our proof relies on the same construction of the Picard iteration map as \cite{KHMK22}, but otherwise diverges significantly in the probabilistic analysis required to establish \eqref{eq:APT}.

\para{Step 2: From Riemannian to Euclidean schemes (and back)}
Albeit crucial, the \ac{APT} property is decidedly not enough to guarantee avoidance:
after all, the constant orbit $\apt{\ctime} = \saddle$ for all $\ctime\geq0$ is trivially an \ac{APT} of \eqref{eq:RGF} but, of course, it does not avoid $\saddle$.
To proceed, we will need to exploit the precise update structure of \eqref{eq:RRM} in conjunction with the stable manifold theorem applied to \eqref{eq:RGF}.

In the Euclidean case, this is achieved by means of an intricate Lyapunov function argument, originally due to \cite{BH95}.
Our second step is to
~devise a new geometric argument to reduce the analysis from an arbitrary \emph{intrinsic} manifold to an isometrically embedded submanifold of $\ambient$.
This step is carried out by a combination of the celebrated Nash embedding theorem and a (smooth) Tietze extension argument to rewrite \eqref{eq:RRM} as a ``corrected'' \acl{RM} scheme on $\ambient$ that actually evolves on $\points$.
This construction also requires a ``perturbation analysis'' to ensure that certain subtle topological issues do not arise when we invoke the stable manifold theorem;
we present the details in \cref{app:analysis}.


\para{Step 3: Controlling the geodesic offset}

As we briefly described in the beginning of the proof overview, this Euclidean reframing of \eqref{eq:RRM} introduces an intrinsic offset error $\curr[\er] = \offset{\curr}{\curr[\step]\curr[\signal]}$, which is difficult to analyze in detail (the offset incurred by a retraction on $\points$ is of similar order, so the exponential-retraction distinction is not important at this stage).
Our crucial observation here is that, under our blanket assumptions, $\curr[\er]$ is small relative to $\curr[\step]$ and, in particular, $\curr[\er] = \bigoh(\curr[\step]^{2})$.
Thanks to this bound, we are able to leverage a series of stochastic bounds \textendash\ originally developed by \citet{Pem90} \textendash\ to show that the probability that these terms will have an adverse effect on exiting the center manifold of $\saddle$ is zero (this is also where \cref{asm:signal} comes in).
We formalize this in \cref{app:analysis};
\cref{thm:avoid} then follows by putting everything together.

\begin{figure}[t]
\def\svgwidth{0.5\textwidth}
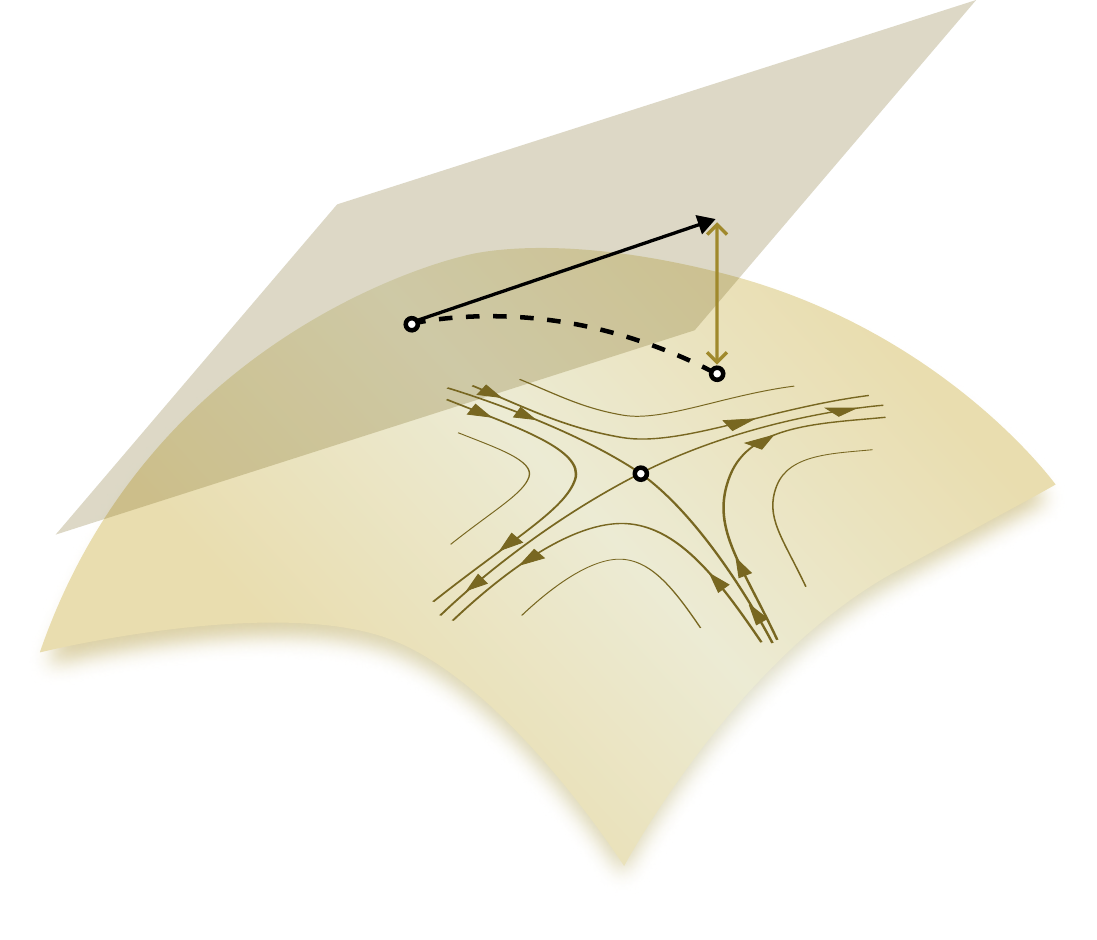
\caption{The geodesic offset $\offset{\base}{\vvec}$.}
\label{fig:offset}
\vspace{\smallskipamount}
\end{figure}

\smallskip
\begin{remark*}
A concept similar to our geodesic offset $\offset{\base}{\vvec}$ has been explored in the \emph{reverse direction} by the very recent works \cite{davis2021subgradient, bianchi2021stochastic} whose goal was to study of avoidance of \emph{Euclidean} subgradient methods as an inexact Riemannian gradient scheme. They further show that this inexact Riemannian gradient descent can avoid saddle points if uniform noise is injected. While their core idea bears some similarity to ours, it remains unclear how to apply the analysis in \cite{davis2021subgradient, bianchi2021stochastic} to handle general \ac{RRM} schemes, such as retraction-based and natural policy gradient methods.
\end{remark*}

\subsection{Applications}

As an illustration of the generality of \cref{thm:avoid}, we now instantiate it to the range of specific algorithms discussed in \cref{sec:examples}.
Since all these algorithms are run with gradient input generated by \eqref{eq:SFO}, applying \cref{thm:avoid} would require mapping the requirements of \cref{asm:signal} to the primitives of \eqref{eq:SFO}.
A convenient way to achieve this is by means of the proposition below:

\begin{restatable}{proposition}{oracle}
\label{prop:oracle}
Suppose that \crefrange{alg:RSGD}{alg:NGD} are run with a gradient oracle $\orcl(\point;\seed) = \vecfield(\point) + \err(\point;\seed)$ such that
\begin{equation}
\label{eq:SFO-req}
\norm{\err(\point;\seed)}_{\point}
	\leq \errbound(\point)
	\quad
	\text{and}
	\quad
\exof{\pospart{\inner{\err(\point;\seed)}{\vvec}_\point}}
	\geq \lbound(\point)
\end{equation}
for all $\vvec\in\tspace[\point]$, $\norm{\vvec}_{\point} = 1$,
and
for suitable functions $\lbound,\errbound\from\points\to\R_{+}$ with $\errbound$ bounded on bounded subsets of $\points$ and $\inf_{\point} \lbound(\point) > 0$.
Then, under \cref{asm:step,asm:radius}, the conclusion of \cref{thm:avoid} holds, that is, \crefrange{alg:RSGD}{alg:NGD} avoid strict saddle manifolds of $\obj$.
\end{restatable}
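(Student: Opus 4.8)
The plan is to obtain \cref{prop:oracle} as a corollary of \cref{thm:avoid}: for each of \crefrange{alg:RSGD}{alg:NGD} I would take the realization of the algorithm as an instance of \eqref{eq:RRM} spelled out in \cref{sec:examples} and verify that the induced surrogate gradients $\curr[\signal]$ satisfy \cref{asm:signal}, with \cref{asm:regularity} in force as a standing hypothesis on $\obj$ and \cref{asm:step,asm:radius} assumed in the statement. Since $\set$ is a compact component and \eqref{eq:avoid} is a tail event, the first move is a localization. Fix compact sets $\set\subset\intr(\cpt_{0})\subset\cpt_{0}\subset\intr(\cpt)$; on the event $\{\dist(\set,\curr)\to0\}$ the iterates $\curr$ --- together with the auxiliary points $\lead$ appearing in \eqref{eq:ROG}, which lie within $\bigoh(\curr[\step])$ of $\curr$ --- eventually remain in $\cpt_{0}$. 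Replacing $\vecfield$ and $\err(\cdot;\seed)$ outside $\cpt$ by smooth cut-off versions that are unchanged on $\cpt_{0}$ keeps $\set$ a strict saddle manifold and preserves \cref{asm:regularity} and \eqref{eq:SFO-req} while making all data globally bounded; applying \cref{thm:avoid} to the modified scheme and transferring back by a standard stopping-time argument (restart at the almost surely finite time after which the process and its auxiliaries never leave $\cpt_{0}$) reduces matters to checking \cref{asm:signal} on $\cpt$. There I may put $\errbound\defeq\sup_{\cpt}\errbound(\cdot)<\infty$, $\lbound\defeq\inf_{\cpt}\lbound(\cdot)>0$, and use that $\vecfield$ is bounded on $\cpt$ --- which also sidesteps the fact that the domain of \eqref{eq:SMD} is merely open, since $\set$ lies in its interior.

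The bulk of the work is then per-algorithm bookkeeping on $\cpt$. For \eqref{eq:RSGD} one has $\curr[\signal]=\orcl(\curr;\curr[\seed])$, hence $\curr[\bias]=0$ and $\curr[\noise]=\err(\curr;\curr[\seed])$, so the offset bound is trivial, $\norm{\curr[\noise]}_{\curr}\leq\errbound$, and --- since $\curr[\seed]$ is drawn independently of $\curr[\filter]$ while $\curr$ is $\curr[\filter]$-measurable --- the second inequality in \eqref{eq:SFO-req} gives $\exof{\pospart{\inner{\curr[\noise]}{\vvec}_{\curr}}\given\curr[\filter]}\geq\lbound(\curr)\geq\lbound$ for every unit $\vvec\in\tspace[\curr]$. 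For \eqref{eq:ReSGD} --- and hence for \eqref{eq:SMD} and \eqref{eq:NGD}, which \cref{sec:examples} realizes as special cases --- I would Taylor-expand the retraction: writing $\curr[\wvec]\defeq\orcl(\curr;\curr[\seed])$, property \eqref{eq:retraction-def} yields $\logof{\curr}{\retrof{\curr}{\curr[\step]\curr[\wvec]}}=\curr[\step]\curr[\wvec]+\curr[\step]^{2}\psi_{\run}$ with $\norm{\psi_{\run}}_{\curr}\leq\const\,\norm{\curr[\wvec]}_{\curr}^{2}$ on $\cpt$, so $\curr[\signal]=\curr[\wvec]+\curr[\step]\psi_{\run}$. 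Conditioning and using $\exof{\curr[\wvec]\given\curr[\filter]}=\vecfield(\curr)$ gives $\curr[\bias]=\curr[\step]\exof{\psi_{\run}\given\curr[\filter]}$, hence $\norm{\curr[\bias]}_{\curr}\leq\bcoef\curr[\step]$; likewise $\curr[\noise]=\err(\curr;\curr[\seed])+\curr[\step](\psi_{\run}-\exof{\psi_{\run}\given\curr[\filter]})$ has norm at most $\errbound+\bcoef'\curr[\step]$; and, since $\pospart{\cdot}$ is $1$-Lipschitz, $\pospart{\inner{\curr[\noise]}{\vvec}_{\curr}}\geq\pospart{\inner{\err(\curr;\curr[\seed])}{\vvec}_{\curr}}-\bcoef''\curr[\step]$, so $\exof{\pospart{\inner{\curr[\noise]}{\vvec}_{\curr}}\given\curr[\filter]}\geq\lbound-\bcoef''\curr[\step]\geq\lbound/2$ for all large $\run$, which is all that avoidance requires. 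Finally, for \eqref{eq:ROG}, \cref{sec:examples} already gives $\curr[\noise]=\ptrans{\lead}{\curr}{\err(\lead;\curr[\seed])}$ and $\curr[\bias]=\ptrans{\lead}{\curr}{\vecfield(\lead)}-\vecfield(\curr)$: parallel transport is a linear isometry, so $\norm{\curr[\noise]}_{\curr}=\norm{\err(\lead;\curr[\seed])}_{\lead}\leq\errbound$; \cref{asm:regularity} gives $\norm{\curr[\bias]}_{\curr}\leq\lips\,\dist(\lead,\curr)=\bigoh(\curr[\step])$ on $\cpt$; and, rewriting $\inner{\ptrans{\lead}{\curr}{\err(\lead;\curr[\seed])}}{\vvec}_{\curr}=\inner{\err(\lead;\curr[\seed])}{\ptrans{\curr}{\lead}{\vvec}}_{\lead}$ with $\lead$ being $\curr[\filter]$-measurable, $\curr[\seed]$ independent of $\curr[\filter]$, and $\ptrans{\curr}{\lead}{\vvec}$ a unit vector, \eqref{eq:SFO-req} yields $\exof{\pospart{\inner{\curr[\noise]}{\vvec}_{\curr}}\given\curr[\filter]}\geq\lbound(\lead)\geq\lbound$.

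The step I expect to be the main obstacle is the retraction expansion for \eqref{eq:ReSGD}/\eqref{eq:SMD}/\eqref{eq:NGD} together with the localization: one must show that the inherent error $\curr[\error]=\curr[\signal]-\vecfield(\curr)$ genuinely decomposes into an $\bigoh(\curr[\step])$ offset plus a bounded martingale-difference term, uniformly over a neighborhood of $\set$, and --- less obviously --- that passing to the conditional expectation does not destroy the excitability lower bound (the $1$-Lipschitz trick above handles this, but only because $\curr[\step]\to0$, which is why it is enough that avoidance constrains only the tail). Once this, the parallel-transport identities for \eqref{eq:ROG}, and the routine cut-off/stopping argument of the localization step are in place, \cref{thm:avoid} applies verbatim and delivers \eqref{eq:avoid} for each of \crefrange{alg:RSGD}{alg:NGD}.
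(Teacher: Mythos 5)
Your proposal is correct and follows essentially the same route as the paper's proof: reduce to verifying \cref{asm:signal} for each algorithm and invoke \cref{thm:avoid}, with the retraction-based cases handled via a second-order Taylor comparison between the retraction and the exponential (the paper does this in normal coordinates, you do it via $\log_{\curr}\circ\retr_{\curr}$, which is equivalent) and the \eqref{eq:ROG} case via isometry of parallel transport plus Lipschitz continuity of $\vecfield$. Your treatment is in fact slightly more explicit than the paper's on two points --- the localization/cut-off step and the use of the $1$-Lipschitz property of $\pospart{\cdot}$ to preserve the excitability bound modulo an $\bigoh(\curr[\step])$ error --- but both arguments are present in the paper's proof in slightly less formal guise, so the approaches coincide.
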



The proof of \cref{prop:oracle} is deferred to \cref{app:analysis};
we only note here that its proof mainly hinges on verifying the bias requirement $\norm{\curr[\bias]}_{\curr} = \bigoh(\curr[\step])$ of \eqref{eq:signal-req} by means of
\begin{enumerate*}
[(\itshape i\hspace*{1pt}\upshape)]
\item
the boundedness of the error function $\errbound(\point)$ on bounded subsets of $\points$;
and
\item
controlling the maximal deviation between a retraction and the exponential map for input vectors bounded by $\injradius$.
\end{enumerate*}

\section{Numerical Illustrations}
\label{sec:numerics}

\begin{figure*}[t!]
\centering
\begin{subfigure}[b]{0.48\textwidth}
\centering
\includegraphics[width=1\textwidth]{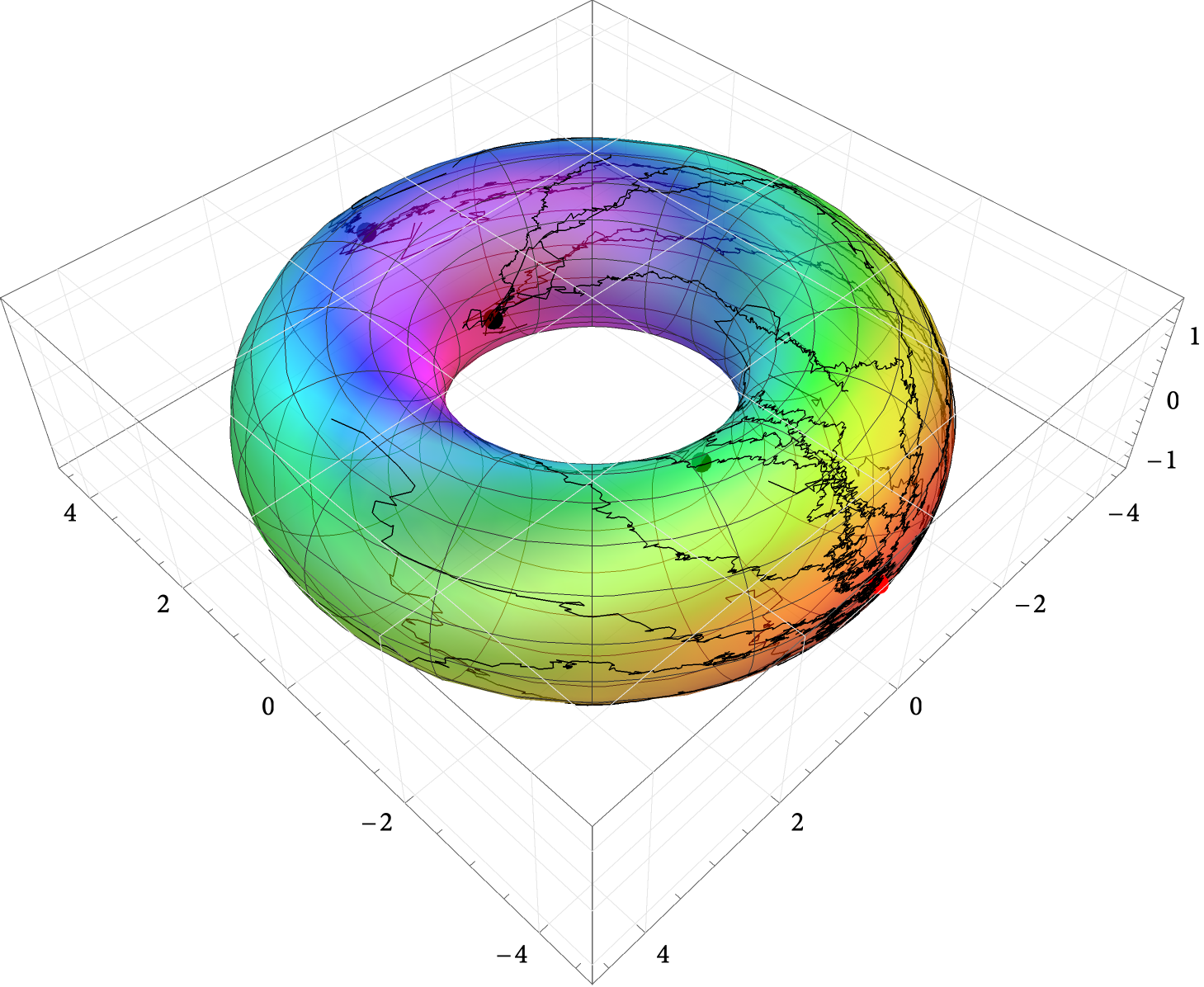}
\caption{\acl{RSGD}}
\end{subfigure}%
\quad
\begin{subfigure}[b]{0.48\textwidth}
\centering
\includegraphics[width=1\textwidth]{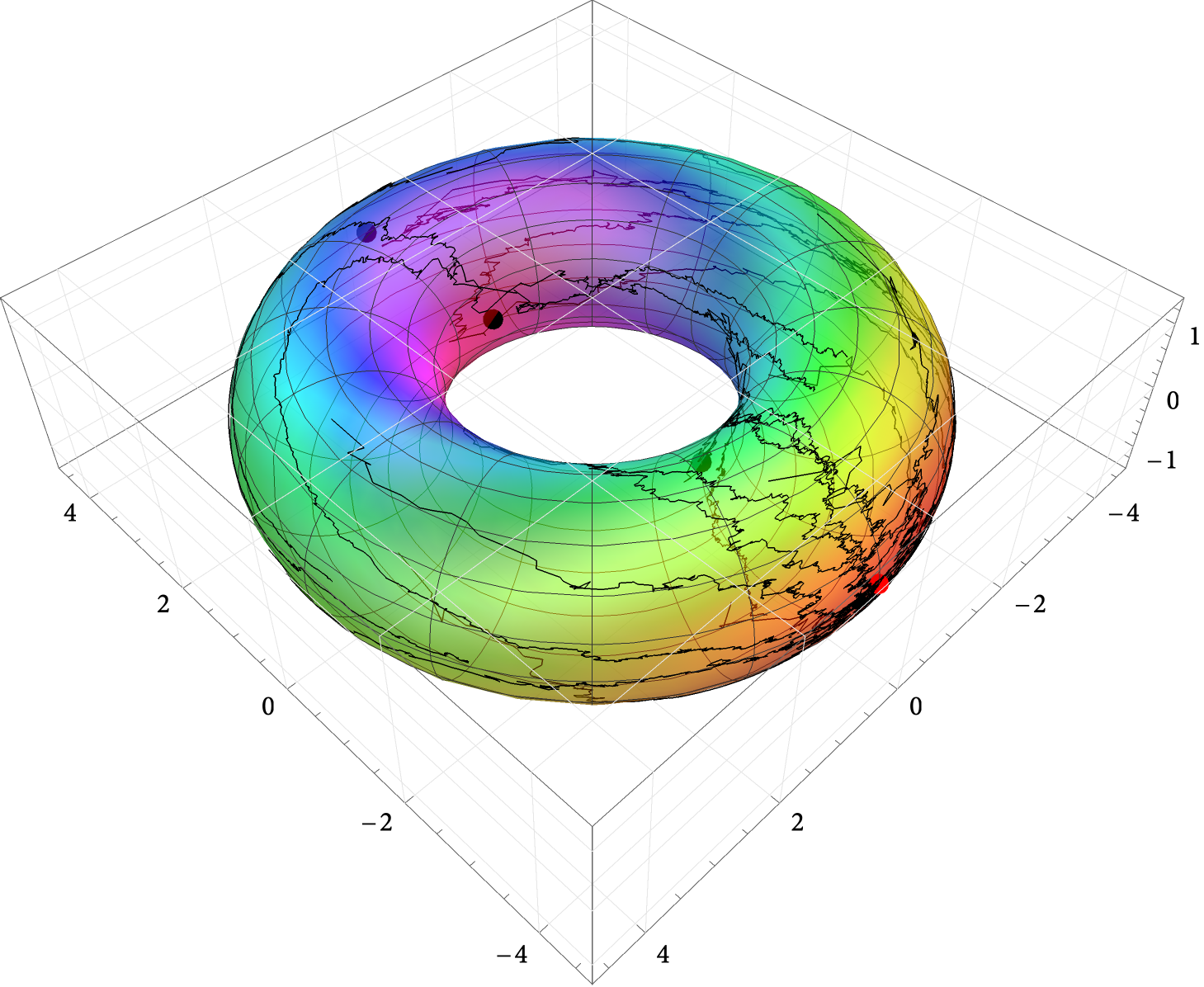}
\caption{\acl{RSEG}}
\end{subfigure}
\caption{Avoidance of saddle points by stochastic \ac{RRM} schemes on the torus.}
\label{fig:illus}
\vspace{\smallskipamount}
\end{figure*}

In this section, we aim to demonstrate the practical applicability of the theoretical framework proposed in our paper by providing numerical illustrations. To do so, we utilize a 2-dimensional torus as the optimization landscape, where the complexity and multi-modal nature of the objective function can be easily visualized in \cref{fig:illus}.

Our objective function features three saddle points (in black) and one global minimizer (in red). We subject two \ac{RRM} schemes, \ie \eqref{eq:RSGD} and \eqref{eq:RSEG}, to initialization in proximity to these saddle points. This strategic choice rigorously tests their ability to navigate and converge to the global optimum. In line with our theoretical predictions, both \ac{RRM} methods avoid the saddle points and eventually reach the desirable global minimum. This empirical confirmation reinforces the central message of our paper: stochastic Riemannian schemes only converge to local minimizers.

\section{Conclusions and future work}
\label{sec:conclusion}

In this paper, we addressed the question of when Riemannian stochastic algorithms can effectively evade saddle points, focusing on the broad category of \acl{RRM} schemes. We introduced a novel framework for analyzing the avoidance of Riemannian saddle points within the \ac{RRM} framework, which encompasses many commonly used Riemannian stochastic algorithms, including retraction-based algorithms. Our framework builds upon the notion of strict saddle points and provides a set of easily verifiable conditions that guarantee the avoidance of such traps.

Our work paves the way for several promising research directions in learning with Riemannian methods. One intriguing avenue for exploration is the investigation of whether Riemannian \emph{zeroth-order} methods, such as the Riemannian extension of the work by \citet{KW52}, can effectively evade strict saddle points. We believe that combining the insights from the \acl{APT} theory with Euclidean analysis can shed light on this question and provide valuable insights into the behavior of these methods in the Riemannian setting.

Furthermore, an interesting direction for future research is the extension of the avoidance of \emph{unstable limit cycles} in Euclidean min-max optimization, as studied by \citet{hsieh2021limits}, to the realm of Riemannian games. Investigating the avoidance of unstable limit cycles in this context has the potential to uncover novel phenomena specific to the manifold settings, leading to a deeper understanding on the intricate dynamics and strategies involved in Riemannian games.

\section*{Acknowledgments}
\begingroup
\small
%
%
This work has been partially supported by
the European Research Council (ERC) under the European Union's Horizon 2020 research and innovation program grant agreement No 815943,
the French National Research Agency (ANR) in the framework of
the ``Investissements d'avenir'' program (ANR-15-IDEX-02),
the LabEx PERSYVAL (ANR-11-LABX-0025-01),
and
MIAI@Grenoble Alpes (ANR-19-P3IA-0003),
and
project MIS 5154714 of the National Recovery and Resilience Plan Greece 2.0 funded by the European Union under the NextGenerationEU Program.
PM is also with the Archimedes Research Unit, Athena RC, Department of Mathematics, National \& Kapodistrian University of Athens.
YPH acknowledges funding through an ETH Foundations of Data Science (ETH-FDS) postdoctoral fellowship. 
\ackperiod
\endgroup

\appendix
\numberwithin{lemma}{section}		
\numberwithin{corollary}{section}		
\numberwithin{proposition}{section}		
\numberwithin{equation}{section}		
\numberwithin{theorem}{section}		

\section{Further examples of \ac{RRM} schemes}
\label{app:examples}

In this section, we provide two additional algorithmic examples supplementing the range of \crefrange{alg:RSGD}{alg:NGD} to illustrate the applicability of our \ac{RRM} template.

\begin{algo}
[\Aclp{RPPM}]
\label{alg:RPPM}
The \textpar{deterministic} \acdef{RPPM} \cite{ferreira2002proximal} is an implicit (``backward'') update rule of the form
\begin{equation}
\label{eq:RPPM}
\tag{RPPM}
\log_{\next} (\curr) = -\curr[\step] \vecfield(\next).
\end{equation}
The \ac{RRM} representation of \eqref{eq:RPPM} is then obtained by taking
\(
\curr[\bias] = \ptrans{\next}{\curr}{\vecfield(\next)} - \vecfield(\curr)
\)
and
$\curr[\noise] = 0$
in the decomposition \eqref{eq:errors} of the error term $\curr[\error]$ of \eqref{eq:RRM}. If, in additional, the true gradient $\vecfield(\next)$ is replaced by an oracle $\orcl(\next;\next[\seed])$, then \eqref{eq:RPPM} becomes the stochastic version of \ac{RPPM} by setting 
\[
\curr[\bias] = \exof{\ptrans{\next}{\curr}{\orcl(\next;\next[\seed])}  - \vecfield(\curr)  \given \curr[\filter]}
\] 
and 
\[
\curr[\noise] = \ptrans{\next}{\curr}{\orcl(\next;\next[\seed])}  - \vecfield(\curr)  - \curr[\bias].
\]
For a detailed discussion, see \cite{ferreira2002proximal} and references therein.
\endenv
\end{algo}

\begin{algo}[\Acl{RSEG}]
\label{alg:RSEG}
Inspired by the original work of \citet{Kor76}, the \acdef{RSEG} method \cite{tang2012korpelevich, neto2016extragradient} proceeds as
\begin{equation}
\label{eq:RSEG}
\tag{RSEG}
\begin{alignedat}{2}
\lead
	&= \expof{\curr}{\curr[\step]  \orcl(\curr;\curr[\seed])},
	\\
\next
	&= \expof{\curr}{\ptrans{\lead}{\curr}{\curr[\step]\orcl(\lead;\lead[\seed])}}
\end{alignedat}
\end{equation}
where $\curr[\seed]$ and $\lead[\seed]$ are independent seeds for \eqref{eq:SFO}.
Thus, to cast \eqref{eq:RSEG} in the \ac{RRM} framework, it suffices to take $\curr[\noise] = \ptrans{\lead}{\curr}{ \err(\lead;\lead[\seed])}$ and $\curr[\bias] = \ptrans{\lead}{\curr}{   \vecfield(\lead)}  - \vecfield(\curr)$.
\endenv
\end{algo}

Under the assumptions in \cref{prop:oracle}, one can show that \crefrange{alg:RPPM}{alg:RSEG} also avoid strict saddle points of $\obj$;
we provide the relevant details in \cref{app:retraction}.

\section{Missing proofs from \cref{sec:results}}
\label{app:analysis}

%

\newcommand{\embed}{\iota}     

\newcommand{\Eucli}{{\ssstyle E}}      
\newcommand{\Euc}[1][\vecfield]{\debug{#1}^{\ssstyle\Eucli}}

\newcommand{\sbmfd}{\mathcal{Q}}

\newcommand{\Evvec}{\Euc[\vvec]}
\newcommand{\Ewvec}{\Euc[\wvec]}
\newcommand{\Epoint}{\Euc[\point]}
\newcommand{\Epoints}{\Euc[\points]}
\newcommand{\Esaddle}{\Euc[\saddle]}
\newcommand{\Esset}{\Euc[\sset]}
\newcommand{\Ecurr}[1][\state]{\curr[#1]^\Eucli}

\newcommand{\Enext}[1][\state]{\next[#1]^\Eucli}

\newcommand{\Etspace}[1][\Epoint]{\mathcal{T}_{#1} \Epoints  }		

\newcommand{\Enoise}{\Ecurr[\noise]}
\newcommand{\Ebias}{\Ecurr[\bias]}

\newcommand{\smfd}[1][\Epoint]{\mathcal{E}^s_{#1}}   	
\newcommand{\cmfd}[1][\Epoint]{\mathcal{E}^c_{#1}} 	
\newcommand{\umfd}[1][\Epoint]{\mathcal{E}^u_{{#1}}} 	

\renewcommand{\nhd}{\Euc[\mathcal{U}]}

\newcommand{\Eflowmap}{\Euc[\flowmap]}

\renewcommand{\cpt}{\Esset}

\newcommand{\nhdc}{\nhd(\cpt)}
\newcommand{\nhdalt}{\mathcal{W}}
\newcommand{\etacurr}{\eta(\curr[\state])}
\newcommand{\etanext}{\eta(\next[\state])}

\subsection{Proof of \cref{thm:APT}}

We begin by proving \cref{thm:APT}, which will play a crucial role in our proof of \cref{thm:avoid}.
For the reader's convenience, we restate the result below:

\APT*

\begin{proof}
To begin, let $\{\bvec_{\coord}(\run)\}_{\coord=1}^d$ be an arbitrary sequence of orthonormal bases for $\tspace[\curr]$, and let $\noise_{\run}^{\pcoord}$ be the (Euclidean) noise vector composed of components of the noise $\curr[\noise]$ in the basis $\{\bvec_{\coord}(\run)\}_{\coord=1\dotsc\vdim}$, viz.
\begin{equation}
\noise_{\coord,\run}^{\pcoord}
	\defeq \inner{\curr[\noise]}{\bvec_{\coord}(\run)}_{\curr}.
\end{equation}
It is then easy to see that $\exof{\noise_{\run}^{\pcoord} \vert \curr[\filter]} = 0$, and, moreover
\begin{equation}
\label{eq:noise_coor}
 \norm*{ \noise_{\run}^{\pcoord} }
	= \norm{\curr[\noise]}_{\curr} 
	\leq \sdev
\end{equation}
by \cref{asm:signal}.
Then, following \citet{Ben99}, consider the ``continuous-to-discrete'' counter
\begin{equation}
\label{eq:tinv}
\tinv(\ctime)
	= \sup\setdef{\run\geq\start}{\ctime\geq\curr[\efftime]}
\end{equation}
which measures the number of iterations required for the effective time $\curr[\efftime] = \sum_{\runalt=\start}^{\run-1} \iter[\step]$ to reach a given timestamp $\ctime \geq 0$.
We further denote the piecewise-constant interpolation of the noise sequence as
\begin{equation}
\label{eq:piecewise-const}
\bar\noise^{\pcoord}(\ctime)
	= \curr[\noise]^{\pcoord}
	\quad
	\text{for all $\ctime \in [\curr[\efftime],\next[\efftime])$, $\run\geq\start$}
\end{equation}
and we let
\begin{equation}
\label{eq:delta}
\Delta(\ctime;\horizon)
	\defeq \sup_{0\leq h\leq \horizon} \norm*{\int_{\ctime}^{\ctime+h} \bar\noise^{\pcoord}(s) \dd s}.
\end{equation}

Moving forward, since $\curr\to\set$ by assumption, we will also have
\begin{equation}
\label{eq:bdd}
\sup\nolimits_{\run} \dist(\curr,\base) \eqdef R
	< \infty
	\quad
	\text{for all $\base\in\set$}
\end{equation}
for some (possibly random) $R\geq0$.
Moreover, since $\obj$ is assumed to be $C^2$, \eqref{eq:bdd} implies that 
\begin{equation}
\label{eq:Lip}
\sup\nolimits_{\run} \norm{\vecfield(\curr)}_{\ssstyle\curr}
	\eqdef G
	< \infty
\end{equation}
for some (possibly random) non-negative constant $G\geq0$.
Moreover, since the manifold $\points$ is assumed to be smooth, the sectional curvatures at each $\curr$, $\run=\running$, must be likewise bounded by some constant $K_{\max}$.
Then, by the analysis of \cite[Eq.~53]{karimi2022dynamics}, there exists a constant $C \equiv C_{L,G,K_{\max},R}$ depending only on $L,G,K_{\max}$ and $R$ such that 
\begin{equation}
\sup_{0 \leq h \leq \horizon}
		\dist(\apt{\ctime + h} , \flow[h][\apt{\ctime}]) \leq C_{L,G,K_{\max},R} \cdot \bracks*{\sup_{\run\geq \tinv(\ctime)}\parens*{  \norm{\curr[\bias]}_{\ssstyle\curr} + \curr[\step]} + \Delta(\ctime-1;\horizon+1)}
\end{equation}

By \cref{asm:signal}, we have $\norm{\curr[\bias]}_{\ssstyle\curr} = \bigoh(\curr[\step])$.
Since $\curr[\step]\to0$, it suffices to show that $\Delta(\ctime;\horizon)\to 0$ \acl{wp1} under \cref{asm:step,asm:signal}.
This is equivalent to showing that, for any $\varepsilon >0$, we have
\begin{equation}
\lim_{\ctime\to\infty} \Delta(\ctime;\horizon) \leq \varepsilon \quad \text{with probability 1}.
\end{equation} 

\newcommand{\superM}{Y}     
\newcommand{\noiseco}{\bar\noise^{\pcoord}}       

To this end, let $\run = \tinv(\ctime)$ and recall that, by \eqref{eq:noise_coor}, we have
\begin{equation}
\label{eq:sub-Gaussian}
\exof*{  \exp\left(\inner{\wvec}{\curr[\bar\noise^{\pcoord}]} \right) \given \curr[\filter]}
	\leq \exp\parens*{ \frac{\sdev^2}{2} \norm{\wvec}^2 }
\end{equation}
for all $\wvec\in\R^\vdim$ where $\vdim$ is the dimension of $\points$.
Therefore, for each $\wvec\in\R^\vdim$, the sequence $\curr[\superM](\wvec)$ defined by 
\begin{equation}
\label{eq:superM}
\curr[\superM](\wvec) \defeq \exp\parens*{  \sum_{\runalt = 1}^\run \inner{\wvec}{\step_{\runalt} \noiseco_{\runalt} }  -   \frac{\sdev^2\norm{ \wvec}^2}{2} \sum_{\runalt = 1}^\run \step_{\runalt}^2}
\end{equation}is a supermartingale.
Since $\curr[\superM]$ is a supermartingale, we have
\begin{align}
\nn
&\probof*{\sup_{\run < \runalt \leq \tinv(\curr[\efftime] + T)} \sum_{i = \run}^{\runalt-1} \inner{\wvec}{\step_{i} \noiseco_i }  \geq \delta  }
	\notag\\
	&\qquad
	\leq \probof*{\sup_{\run < \runalt \leq \tinv(\curr[\efftime] + T)}  \superM_{\runalt}(\wvec)  \geq \curr[\superM](\wvec) \exp\parens*{ \delta - \frac{\sdev^2\norm{\wvec}^2}{2}  \sum_{i = \run}^{\tinv(\curr[\efftime] + \horizon)-1} \step_{i}^2 } }
	\notag\\
	\label{eq:hold}
	&\qquad
	\leq \exp\parens*{  \frac{\sdev^2\norm{\wvec}^2}{2} \sum_{i = \run}^{\tinv(\curr[\efftime] + \horizon)-1} \step_{i}^2 - \delta }
\end{align}for any $\delta >0$.
Now, let $e_i$ be the $i$-th basis vector of $\R^\vdim$.
Then, by \eqref{eq:hold}, we have
\begin{align}
\probof*{\sup_{\run < \runalt \leq \tinv(\curr[\efftime] + T)} \sum_{i = \run}^{\runalt-1} \inner{\pm \vdim e_i}{\step_{\runalt} \noiseco_{\runalt} }  \geq \varepsilon  } & =
\probof*{\sup_{\run < \runalt \leq \tinv(\curr[\efftime] + T)} \sum_{i = \run}^{\runalt-1} \inner{ \pm \varepsilon^{-1}\delta \vdim e_i}{\step_{\runalt} \noiseco_{\runalt} }  \geq \delta  }
	\notag\\
\label{eq:hold2}
&\leq \exp\parens*{  \frac{\sdev^2\delta^2\vdim^2 }{2\varepsilon^2} \sum_{i = \run}^{\tinv(\curr[\efftime] + \horizon)-1} \step_{i}^2 - \delta }.
\end{align}
Optimizing \eqref{eq:hold2} over $\delta$, we get
\begin{align}
\label{eq:hold3}
\probof*{\sup_{\run < \runalt \leq \tinv(\curr[\efftime] + T)} \sum_{i = \run}^{\runalt-1} \inner{\pm\vdim   e_i}{\step_{\runalt} \noiseco_{\runalt} }  \geq \varepsilon  } 
&\leq \exp\parens*{ - \frac{\varepsilon^2}{2\sdev^2\vdim^2 \sum_{i = \run}^{\tinv(\curr[\efftime] + \horizon)-1} \step_{i}^2}  }.
\end{align}
Since $\curr[\step]\to 0$, with loss of generality we may assume that $\curr[\step] \leq 1$, and hence
\begin{align}
\probof*{\sup_{\run < \runalt \leq \tinv(\curr[\efftime] + T)} \sum_{i = \run}^{\runalt-1} \inner{\pm \vdim e_i}{\step_{\runalt} \noiseco_{\runalt} }  \geq \varepsilon  } 
&\leq \exp\parens*{ - \frac{\varepsilon^2}{2\sdev^2\vdim^2\sum_{i = \run}^{\tinv(\curr[\efftime] + \horizon)-1} \step_{i}}  }
	\notag\\
&\leq \exp\parens*{ - \frac{\varepsilon^2}{2\sdev^2\vdim^2 \int_{\ctime}^{\ctime+\horizon} \bar\step(\ctime)  }  }
\end{align}
where, analogously to \eqref{eq:piecewise-const}, we have defined the piece-wise constant interpolated step-size sequence
\begin{equation}
\label{eq:piecewise-const-step}
\bar\step(\ctime)
	= \curr[\step]
	\quad
	\text{for all $\ctime \in [\curr[\efftime],\next[\efftime])$, $\run\geq\start$.}
\end{equation}

Since
\begin{align}
\norm*{\sum_{i = \run}^{\runalt-1}\step_{i} \noiseco_i} \geq \varepsilon \quad \Rightarrow \quad \exists\, i \text{ such that }\sum_{i = \run}^{\runalt-1} \inner{\pm \vdim e_i}{\step_{i} \noiseco_i} \geq \varepsilon,
\end{align}by the union bound, we have
\begin{align}
\probof*{  \Delta(\ctime,\horizon) \geq \varepsilon  } \leq 2\vdim \cdot \exp\parens*{ - \frac{\varepsilon^2}{2\sdev^2\vdim^2 \int_{\ctime}^{\ctime+\horizon} \bar\step(\ctime)  }  }
\leq 2\vdim \cdot \exp\parens*{ - \frac{\varepsilon^2}{2\sdev^2\vdim^2 \horizon \bar\step(\ctimealt)  }  }
\end{align}for some $\ctime\leq\ctimealt\leq\ctime+\horizon$.
Therefore, by setting $\lambda\defeq  \exp\parens*{ - \frac{\varepsilon^2}{2\sdev^2\vdim^2 \horizon  }  } <1$, we have
\begin{equation}
\sum_{\runalt} \probof*{  \Delta(\runalt\horizon,\horizon) \geq \varepsilon  } \leq 2\vdim \sum_{\runalt} \lambda^{1/\step_{\runalt}} < \infty
\end{equation}
by \cref{asm:step}.
The Borel-Cantelli Lemma then implies that the following event happens almost surely:
\begin{equation}
\lim_{\runalt\to\infty} \Delta(\runalt\horizon;\horizon) \leq \varepsilon.
\end{equation}
The proof is finished by noting that, for $\runalt\horizon\leq \ctime < (\runalt+1)\horizon$,
\begin{equation}
\Delta(\ctime;\horizon) \leq 2 \Delta(\runalt\horizon;\horizon) + \Delta(\runalt\horizon+\horizon;\horizon)
\end{equation}by triangle inequality.
\end{proof}

\subsection{Proof of \cref{thm:avoid}}
\label{app:avoid-repel}

We are now in a position to present our proof of \cref{thm:avoid}, which we restate below for convenience:

\avoid*

\renewcommand{\set}{\Euc[\mathcal{Q}]}

\begin{proof}
Assume that $\curr\to\sset$.
We will show that this event has zero probability in a series of steps which we outline below.

\para{Step 1: Isometrically embedded \acl{RM} iterates}

Since $\points$ is assumed to be smooth, the second Nash embedding theorem \cite{KN96} implies there exists a smooth and \emph{isometric} embedding $\embed:\points\to\R^\edim$ such that, for all $\point\in\points$ and all $\vvec,\wvec\in\tspace$, we have
\begin{equation}
\label{eq:compat}
\inner{\vvec}{\wvec}_{\point}
	= \inner{\Jac\embed_{\point}(\vvec) }{\Jac\embed_{\point}(\wvec)}.
\end{equation}
Since $\embed$ is an embedding, it is surjective.
Since it is isometric, it preserves distance and hence must be one-to-one.
Therefore, $\embed$ is an diffeomorphism since it is also smooth.
We can therefore define the \emph{pushforward} of the vector field $\vecfield$ on $\points$ to a vector field on the image $\Euc[\points]\subset \R^\edim$ in the usual way as
\begin{equation}
\Euc_0(\Euc[\point])
	\defeq \Jac\embed_{\point} \vecfield(\point)
	\qquad
	\text{for all $\Euc[\point] = \embed(\point) \in \R^\edim$}.
\end{equation}
We also set $\Euc[\sset] \defeq \embed(\sset)$.

By the Tietze extension theorem and the smooth manifold extension lemma \cite{Lee03}, $\Euc_0(\Euc[\point])$ can be extended to a Lipschitz continuous vector field on all of $\R^\edim$, which we still denote by $\Euc_0(\Euc[\point])$.
To avoid trivialities, we will also need to ensure that $\Euc_0(\Euc[\point])$ is not $0$ in a neighborhood of $\Euc[\sset]$:
If this is the case, then we set our target field $\Euc (\Epoint)\defeq \Euc_0(\Euc[\point])$;
otherwise, let $\ones$ denote the vector of 1's in all coordinates, and define a new vector field $\Euc$ on $\R^\edim$ as
\begin{equation}
\Euc(\Epoint)
	\defeq \Euc_0(\Euc[\point]) + \dist^\Eucli(\Epoint,\Esset)^2\cdot\ones
\end{equation}
where $\dist^\Eucli(\Epoint,\Esset)\defeq \inf_{y^\Eucli\in\Esset} \norm{\Epoint-y^\Eucli}$.
Obviously, this new vector field agrees with $\Euc_0(\Euc[\point])$ on $\Epoints$ and therefore is still the pushforward of $\vecfield$ under $\embed$.
Moreover, it is not uniformly 0 in a neighborhood of $\Esset$.
[It is worth noting that the so-defined vector field $\Euc$ is in general \emph{not} the (Euclidean) gradient of any function, a fact which presents significant difficulty to our analysis.]

\para{Step 2: $\Esset$ is an unstable invariant set}
Our next goal is to show that there exists an \emph{unstable neighborhood} $\nhd$ around $\Esset$ in the following sense:
First, for each $\saddle \in \sset$, consider its image $\Euc[\saddle] = \embed(\saddle)\in\Esset$.
Since $\Euc$ agrees with the pushforward of $\vecfield \equiv \rgrad\obj$ under $\embed$, and since $\embed$ is an isometry, we have the following relation for all tangent vector $\vvec\in\tspace[\saddle]$:
\begin{equation}
\label{eq:iso-Hess}
\inner{\Jac\Euc_{\ssstyle\point^{\ssstyle\Eucli}}\Jac\embed_{\saddle}(\vvec)}{\Jac\embed_{\saddle}(\vvec)}
	= \inner{\Hess\obj(\saddle)\vvec}{\vvec}_{\saddle}.
\end{equation}
Since $\saddle$ is a strict saddle point, \eqref{eq:iso-Hess} implies that
~$\eig_{\min}\left(\Jac\Euc_{\Esaddle}\right) < -\const_{-}<0$ for all $\Esaddle\in\Esset$.
By an established series of arguments \cite{BH96,Ben99,MHKC20}, using the stable manifold theorem for a strict saddle \cite{Shu87} and the transversality of the strict saddle manifold \cite{abraham2008foundations}, there exists a $(\edim-\subdim)$-dimensional embedded submanifold $\set$ in $\R^\edim$ that contains $\Esset$.
[Here, $1 \leq \subdim \leq \edim$, and $\edim-\subdim$ represents the dimension of the \emph{unstable manifold} of $\Euc$.]
Moreover, writing $\Eflowmap$ for the flow generated by $\Euc$, it follows that $\set$ is locally invariant under $\Eflowmap$.
Hence, there exists a neighborhood $\mathcal{N}^\Eucli$ of $\cpt$ in $\R^\edim$ and a positive time $\ctime_0$ such that for all $\abs{\ctime} \leq \ctime_0$, the following inclusion holds:
\begin{equation}
\Eflowmap_\ctime(\mathcal{N}^\Eucli \cap \set) \subset \set.
\end{equation}

To proceed, note that $\R^\edim$ can be decomposed further as the direct sum of the tangent space to $\set$ at $\Epoint$, denoted by $\mathcal{T}_{\Epoint}\set$, and an additional complementary subspace denoted by $\umfd$:
\begin{equation}
\R^\edim = \mathcal{T}_{\Epoint}\set \oplus \umfd.
\end{equation}
The mapping $\Epoint \to \umfd$ is continuous, where $\Epoint$ varies over $\cpt$ and $\umfd$ belongs to the Grassmanian manifold $\Grass[\subdim]$.
It is important to note that \emph{$\umfd$ contains at least one direction in $\mathcal{T}_{\ssstyle\Epoint}\Epoints$} due to \eqref{eq:iso-Hess}.
Then, for all $\ctime \in \R$ and $\point \in \Esset$, the Jacobian of $\Eflowmap_\ctime$ evaluated at $\Epoint$ maps $\umfd$ to $\umfd[\Eflowmap_\ctime(\Epoint)]$, i.e., 
\begin{equation}
\Jac\Eflowmap_\ctime(\Epoint) \umfd = \umfd[\Eflowmap_\ctime(\Epoint)].
\end{equation}
Finally, and most importantly, we have the following characterization that formalizes the idea that \emph{all directions in the unstable manifold should diverge under $\Eflowmap_\ctime$}:
There exist positive constants $c$ and $C$ such that for all $\Epoint \in \Esset$, $\Ewvec \in \umfd$, and $\ctime \geq 0$, the following inequality holds:
\begin{equation}
\label{eq:unstable-vector}
\norm{ \Jac \Eflowmap_\ctime(\Epoint) \Ewvec } \geq C e^{c \ctime} \norm{\Ewvec}.
\end{equation}

The above verifies all the conditions for a \emph{unstable invariant set} for $\Esset$ in the sense of \citet{Ben99}.
A deep result by \citet{BH95} then asserts the existence of a \emph{local Lyapunov function} near a neighborhood of $\Esset$, whose construction we outline below.

\para{Step 3: Local Lyapunov function $\Euc[\eta]$}

For a right-differentiable function $\Euc[\eta] \from \vecspace \to \R$ we define its right derivative $\Jac \Euc[\eta]$ applied to a vector $\Euc[h] \in \vecspace$ by 
\begin{equation}
\label{eq:right-derivative}
\Jac \Euc[\eta]({\Epoint})\Euc[h] = \lim_{\ctime \to 0^+} \frac{\Euc[\eta]( \Epoint + \ctime \Euc[h]) - \Euc[\eta](\Epoint)}{\ctime}.
\end{equation}If $\Euc[\eta]$ is differentiable, then \eqref{eq:right-derivative} is simply $\inner{  \grad \Euc[\eta](\Epoint) }{ \Euc[h] }$.
In view of all this, \citet{Ben99} provides the following crucial result:

\begin{proposition}[{\citeauthor{Ben99}, \citeyear{Ben99}, Prop.~9.5}]
\label{prop:lyapunov}
There exists a compact neighborhood $\nhdc$ of $\cpt$, 
positive numbers $l, \beta >0$,
and a map $\Euc[\eta]\from\nhd(\cpt) \to \R^+$ such that $\Euc[\eta](\Epoint) =0$ if and only if $\Epoint\in\set$, and the following holds:
\begin{enumerate}
[\upshape(\itshape i\hspace*{1pt}\upshape)]
\item \label[item]{itm:lyapunov1} $\Euc[\eta]$ is $C^2$ on $\ \nhdc \setminus \set$.

\item \label[item]{itm:lyapunov2} For all $\Epoint \in \nhdc \cap  \set$, $\Euc[\eta]$ admits a right derivative $\Jac \Euc[\eta] (\Epoint)\from\R^\edim \to \R^ \edim$ which is Lipschitz, convex and positively homogeneous.

\item \label[item]{itm:lyapunov3}
There exists $k>0$ and a neighborhood $\Euc[\nhdalt] \subset \R^\edim$ of $0$ such that for all $\Epoint \in \nhdc$ and $\Evvec\in \Euc[\nhdalt]$,
\begin{equation}
\Euc[\eta](\Epoint+\Evvec) \geq \Euc[\eta](\Epoint) + {\Jac \Euc[\eta](\Epoint)}{ \Evvec} - k \norm{ \Evvec}^{2}.
\end{equation}

\item  \label[item]{itm:lyapunov4} There exists $c_1>0$ such that for all $\Epoint \in \nhdc \setminus \set$
\begin{equation}
\label{eq:modi}
\norm*{ \Pi_{\Epoints} \left(\Jac \Euc[\eta](\Epoint) \right)} \geq c_1
\end{equation}
where $\Pi_{\Epoints}$ is the projection on $\Epoints$.\footnote{The original statement in \cite{Ben99} is $\norm*{ \Jac \Euc[\eta](\Epoint)} \geq c_1$.
The inequality in \eqref{eq:modi} is obtained via the same proof and noting that $\umfd$ contains at least one direction in $\Etspace[\Epoint]$.} In addition, for all $\Epoint \in \nhdc \cap \set$ and $\Evvec\in \R^\edim$
\begin{equation}
\inner{\Jac \Euc[\eta](\Epoint)}{ \Evvec} \geq c_1 \norm{   \Evvec- \Jac \Eucl(\Epoint) \Evvec }
\end{equation}where $\Eucl$ is the projection of a neighborhood of $\Esset$ onto $\set$.

\item  \label[item]{itm:lyapunov5} For all $\Epoint \in  \nhdc \cap \set$, $\Ewvec\in \mathcal{T}_{\Epoint}\set$ and $\Evvec\in \R^\edim$,
\begin{equation}
{\Jac \Euc[\eta](\Epoint)}{( \Ewvec+\Evvec)} = {\Jac \Euc[\eta](\Epoint)}{\Evvec}.
\end{equation}

\item  \label[item]{itm:lyapunov6} For all $\Epoint \in \nhdc$ we have 
\begin{equation}
{\Jac \Euc[\eta](\point)}{ \Euc(\Epoint)} \geq \beta \Euc[\eta](\Epoint).
\end{equation}
\end{enumerate}
\end{proposition}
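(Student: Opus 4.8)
Since $\cpt = \Esset$ is a compact, normally hyperbolic, unstable invariant set of the Lipschitz (and, in general, non-gradient) field $\Euc$, with the uniform expansion bound \eqref{eq:unstable-vector} in force, \cref{prop:lyapunov} is --- apart from the refinement recorded in \cref{itm:lyapunov4} --- exactly Proposition~9.5 of \citet{Ben99}, which itself rests on the Lyapunov construction of \citet{BH95}. The plan is therefore to recall that construction and to verify that it survives the two features specific to our setting: (a) the ambient field $\Euc$ is not the gradient of any function; and (b) the unstable bundle $\umfd$ always contains a direction tangent to the embedded manifold $\Epoints$ (a consequence of \eqref{eq:iso-Hess}), which is what upgrades the bound $\norm{\Jac\Euc[\eta](\Epoint)} \geq c_1$ of \cite{Ben99} to the projected estimate in \cref{itm:lyapunov4}.

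First I would fix, using the output of Step~2, a $C^2$ tubular neighborhood $\nhdc$ of $\cpt$ inside the locally invariant manifold $\set$, together with a $C^2$ fiber projection $\pi \from \nhdc \to \set$ along a smooth extension of the complementary field $\Epoint \mapsto \umfd$; every $\Epoint \in \nhdc$ then decomposes uniquely as $\Epoint = \pi(\Epoint) + \xi(\Epoint)$ with transverse component $\xi(\Epoint) \in \umfd[\pi(\Epoint)]$. Next I would take for $\Euc[\eta]$ the standard Lyapunov averaging of the squared Euclidean norm of $\xi$ along the flow --- schematically,
\[
\Euc[\eta](\Epoint) \;\propto\; \int_{0}^{\infty} e^{-2\beta\ctimealt}\, \norm{\Jac\Eflowmap_{-\ctimealt}(\pi(\Epoint))\,\xi(\Epoint)}^{2} \,\dd\ctimealt ,
\]
the integral being finite for $\beta>0$ small by \eqref{eq:unstable-vector} --- possibly multiplied by a smooth cutoff localizing it to $\nhdc$. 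With this choice, $\Euc[\eta]$ vanishes precisely on $\set$ and is $C^2$ off $\set$, giving \cref{itm:lyapunov1}; near $\set$ it is the square of a fiberwise norm, so its one-sided derivative is Lipschitz, convex and positively homogeneous (\cref{itm:lyapunov2}), is insensitive to displacements tangent to $\set$ (\cref{itm:lyapunov5}), and satisfies the quadratic remainder estimate in \cref{itm:lyapunov3} by a Taylor expansion of the $C^2$ pieces; finally, differentiating $\Euc[\eta]$ along $\Eflowmap$ and invoking the defining property of the adapted norm yields the infinitesimal growth estimate $\Jac\Euc[\eta](\Epoint)\,\Euc(\Epoint) \geq \beta \Euc[\eta](\Epoint)$ of \cref{itm:lyapunov6}. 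It is worth stressing that this last step uses only the expansion of $\umfd$ under $\Eflowmap$ and never that $\Euc$ is a gradient, which is what takes care of feature~(a).

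It then remains to establish \cref{itm:lyapunov4}. Its second half (for $\Epoint \in \nhdc \cap \set$) is immediate from the norm-square form of $\Euc[\eta]$, whose one-sided derivative controls precisely the transverse displacement $\Evvec - \Jac\Eucl(\Epoint)\Evvec$. For the first half, the argument of \cite{Ben99} --- positive homogeneity of the derivative plus compactness of $\cpt$ --- gives $\norm{\Jac\Euc[\eta](\Epoint)} \geq c_1$; to sharpen this to $\norm{\Pi_{\Epoints}(\Jac\Euc[\eta](\Epoint))} \geq c_1$ I would use feature~(b): by \eqref{eq:iso-Hess} the fiber $\umfd$ meets $\mathcal{T}_{\Epoint}\Epoints$, and (by compactness of $\cpt$) it does so in a direction staying uniformly bounded away from the orthogonal complement of $\Epoints$; since, by construction, $\Jac\Euc[\eta](\Epoint)$ is aligned with the transverse/unstable directions, its $\Epoints$-component is then bounded below by a positive constant, which we rename $c_1$.

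The principal obstacle is not any single estimate but the global \emph{patching}: $\cpt$ is compact but need not be covered by a single chart for $\set$ or a single trivialization of $\umfd$, so the local ``transverse norm-square'' pieces must be glued over a finite cover while keeping $\Euc[\eta]$ genuinely $C^2$ off $\set$, keeping its one-sided derivative Lipschitz, convex and positively homogeneous on $\set$, and re-deriving \cref{itm:lyapunov3} and \cref{itm:lyapunov6} \emph{uniformly} after the gluing. A secondary subtlety is that \eqref{eq:unstable-vector} must be read as a statement about the nonlinear flow $\Eflowmap$, not merely its linearization along $\cpt$; this is precisely what the stable manifold theorem \cite{Shu87}, together with a cone-field / Gr\"onwall argument, supplies. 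With all of this in hand one simply quotes \citet{Ben99} and \citet{BH95}, and \cref{prop:lyapunov} then feeds back into Step~3 of the proof of \cref{thm:avoid}.
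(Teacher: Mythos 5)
Your treatment matches the paper's: this proposition is cited (not proved) verbatim from \citet{Ben99}, Prop.~9.5, with the only original content being the footnote justification of the projected bound in item~(iv), which --- exactly as you argue --- follows from the same proof once one observes that $\umfd$ contains at least one direction in $\Etspace[\Epoint]$. Your sketch of the underlying Bena\"{\i}m--Hirsch Lyapunov construction is additional background that the paper simply takes as a black box.
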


The function $\Euc[\eta]$ will serve as a local ``{energy function}'' that plays an instrumental role in our analysis;
the well-posedness of $\Pi$ is guaranteed by \cite[Chap 4]{Hir76}.

\para{Step 4: Geodesic offset}

Consider the image of an \ac{RRM} scheme $\Ecurr \defeq \embed(\curr)$.
If $\Ecurr\notin\nhdc$ for all $\run$, then there is nothing to prove.
Otherwise, without loss of generality we may assume that $\Euc[\state_1]\in\nhdc$.
Accordingly, define the first exit time $T$ from $\nhdc$ as
\begin{equation}
T \defeq \inf \setdef{\runalt \geq 1}{\Euc[\state]_{\run} \notin \nhdc }.
\end{equation}
Evidently, $T$ is a stopping time adaptive to $\curr[\filter]$, so it suffices to show that%
\footnote{\cref{thm:avoid} follows from \eqref{eq:stop-as} because the event $\{\dist(\curr,\sset) \to 0\}$ is contained in $\braces{  T= \infty }$.}
\begin{equation}
\label{eq:stop-as}
\probof{  T= \infty } = 0.
\end{equation}

To this end, a notion that plays a central role in our analysis is the \emph{geodesic offet}, defined as follows.
Define the pushforward of the respective noise and bias vectors in the \ac{RRM} scheme $\curr$ by
\begin{align}
\Ecurr[\noise] \defeq \Jac\embed_{\ssstyle\curr} \curr[\noise], \quad \Ecurr[\bias] \defeq \Jac\embed_{\ssstyle\curr} \curr[\bias].
\end{align}
\emph{It is important to remember that $ \Ecurr[\noise]  \in \Etspace[\Ecurr]$}, a fact that we will use freely in the sequel.

We now formally define the \emph{geodesic offset} $\offset{\base}{\vvec} \in \R^\edim$ as, for any $\base\in\points$ and $\vvec\in\tspace$,
\begin{align}
\offset{\base}{\vvec} \defeq \embed(\exp_{\base}(\vvec)) - \embed(\base) - \Jac\embed_\base(\vvec).
\end{align}

\newcommand{\Eer}{\er_{\run}^\Eucli}

By \cref{asm:radius}, there exists $\injradius >0$ such that, for all $\norm{\vvec}_{\ssstyle\base} < \injradius$, the exponential mapping is the unique minimizing geodesic.
Furthermore, for all such $\vvec$'s, define the curve $\curve^\Eucli(\ctime) \defeq \embed(\base) + \ctime\Jac\embed_\base(\vvec)$, then
\begin{equation}
\curve^\Eucli(0) = \embed(\base), \quad \dot{\curve}^\Eucli(0) = \Jac\embed_\base(\vvec)
\end{equation}so that $\curve^\Eucli(\ctime)$ agrees with the image of the geodesics $\embed(\exp_{\base}(\ctime\vvec))$.
As a result, for any $\norm{\vvec}_{\ssstyle\base} < \injradius$, we have $\offset{\base}{\vvec} = \bigoh(\norm{\vvec}_\base^2)$.
Now, setting $\point\subs\curr$ and $\vvec\subs \curr[\step](\vecfield(\curr)+ \curr[\noise]+\curr[\bias])$, we have
\begin{align}
\Enext &= \Ecurr + \curr[\step] \parens*{  \Euc(\Ecurr) + \Enoise+\Ebias   }+ \Eer
\end{align}
where $\Eer \defeq \offset{\curr}{\curr[\step]( \vecfield(\curr)+\curr[\noise]+\curr[\bias])}$.
By \cref{asm:signal}, we know that $ \Enoise+\Ebias = \bigoh(1)$ almost surely.
Moreover, since $\vecfield$ is smooth, on the event $T=\infty$, $\Ecurr \in \nhdc$ for all $\run$ and therefore $\sup_{\run\geq1} \norm{\Euc(\Ecurr)} < \infty$.
Since $\curr[\step]\to 0$, for any $\run$ large enough, we get
\begin{equation}
\label{eq:er-size}
\Eer= \bigoh(\curr[\step]^2).
\end{equation}

Now, define two sequences of random variables $\{Y_{\run}\}_{\run \geq 1}$ and $\{S_{\run}\}_{\run\geq 1}$ as
\begin{subequations}
\begin{align}
\label{eq:Xdef}
\next[Y]
	&=\parens*{ \Euc[\eta](\Euc[\next[\state]]) - \Euc[\eta]( \Ecurr ) } \one_{\braces{\run \leq T}} + \curr[\step] \one_{ \braces{\run > T}},
\\
S_0
	&=\Euc[\eta](\state^{\Eucli}_0),  \quad S_{\run}  = S_0 + \sum_{\runalt=1}^\run Y_{\runalt}.
\label{eq:Sdef}
\end{align}
\end{subequations}

The importance of these sequences is that the event $\{T=\infty\}$ is contained in the event $\{S_{\run} \to 0\}$.
To see this, assume $T=\infty$.
Then we have $\next[Y] = \Euc[\eta](\Enext) - \Euc[\eta](  \Ecurr  ) $ and $S_n = \Euc[\eta](\curr)$ by \cref{eq:Xdef,eq:Sdef}.
In addition, since $\{ \Ecurr\}$ remains in $\nhdc$ by definition of the stopping time $T$, \cref{thm:APT} combined with \cite[Theorem 5.7]{Ben99} asserts that the limit set $L(\{\Ecurr\})$ of $\{\Ecurr\}$ is a nonempty compact invariant subset of $\nhdc$, so that for all $y^{\Eucli} \in L(\{\Ecurr\})$ and $\ctime\in \R $, $\Eflowmap_{\ctime}(y^{\Eucli}) \in \nhdc$.
But then \cref{prop:lyapunov}\cref{itm:lyapunov6} implies that $\Euc[\eta](\Eflowmap_{\ctime}(y^{\Eucli})) \geq e^{\beta \ctime} \Euc[\eta](y^\Eucli)$ for all $t>0$, forcing $\Euc[\eta](y^\Eucli)$ to be zero.
Since $\Euc[\eta](\Epoint) = 0 $ if and only if $\Epoint\in\set$, we have $L( \{ \Ecurr\}) \subset \set$, which implies $S_{\run} = \Euc[\eta](\Ecurr) \to 0$.

Therefore, the rest of the proof is devoted to showing that $\probof*{ \lim_{\run\to \infty} S_{\run} =0 } = 0$.


\para{Step 5: Probabilistic estimates}
To this end, we will need two technical lemmas, originally due to \citet{Pem92}, and extended to their current form by \citet{BH96}.

\begin{lemma}\label{lem:prob-est}
Let $S_{\run}$ be a nonnegative stochastic process, $S_{\run} = S_0 + \insum_{\runalt = 1}^\run Y_{\runalt}$ where $Y_{\run}$ is $\curr[\filter]$-measurable.
~Let $\alpha_n \defeq \insum_{ \runalt = \run}^\infty \step_{\runalt}^2$.
Assume there exist a sequence $0 \leq \eps_{\run} = o( \sqrt{\alpha_{\run}})$, constants $a_1, a_2 > 0$ and an integer $N_0$ such that for all $\run \geq N_0$,
\begin{enumerate}
[\upshape(\itshape i\hspace*{1pt}\upshape)]
\item \label[item]{itm:prob-est1}$\abs{Y_{\run}}= o( \sqrt{\alpha_n} )$.
\item\label[item]{itm:prob-est2} $\one_{  \braces{S_{\run} > \eps_{\run}} } \ex \bracks{   Y_{\run+1}   \vert \curr[\filter] }  \geq 0$.

\item \label[item]{itm:prob-est3} $\ex  \bracks{  S^2_{\run+1} - S^2_{\run} \vert \curr[\filter]  }  \geq a_1 \step^2_{\run}$.
\item \label[item]{itm:prob-est4} $\ex \bracks{  Y^2_{\run+1}  \vert \curr[\filter] }  \leq a_2 \step^2_{\run}.$
\end{enumerate}Then $\probof{  \lim_{\run \to \infty}\nolimits   S_{\run} = 0 } = 0.$
\end{lemma}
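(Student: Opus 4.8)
The plan is to follow the martingale argument of \citet{Pem92} (subsequently sharpened by \citet{BH96}): we prove that whenever $S_{\run}$ is small, it climbs back to a \emph{fixed} macroscopic level within a controlled number of steps with probability bounded away from $0$, and then iterate this ``escape'' estimate until the probability of \emph{never} returning is squeezed to zero. First I would set $\Omega_0 \defeq \{\lim_{\run\to\infty} S_{\run}=0\}$ and fix a level $\delta_0>0$ (chosen later). On $\Omega_0$ one has $S_{\run}<\delta_0$ for all large $\run$, so $\Omega_0 \subseteq \bigcup_{m\geq N_1}\bigcap_{\run\geq m}\{S_{\run}<\delta_0\}$, and since a countable union of null events is null it suffices to show that for each $m\geq N_1$, $\probof{S_{\run}<\delta_0\text{ for all }\run>m \given \filter_m}=0$ on $\{S_m<\delta_0\}$. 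This in turn follows from an escape estimate of the form: there is $c_0>0$ such that for every $m\geq N_1$ there is a finite horizon $m'=m'(m)$ with
\[
\probof{\exists\,\run\in(m,m'] : S_{\run}\geq\delta_0 \given \filter_m}\geq c_0 \qquad\text{on }\{S_m<\delta_0\},
\]
because applying this along the consecutive windows $(m,m'],(m',m''],\dots$ — each still starting from the event $\{S_{\cdot}<\delta_0\}$ and from an index $\geq N_1$ — bounds the probability of never reaching $\delta_0$ by $(1-c_0)^j\to 0$.

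For the escape estimate itself, fix $m\geq N_1$, condition on $\filter_m$, and write $S_{\run+1}-S_{\run}=Y_{\run+1}=b_{\run}+\xi_{\run}$ with $b_{\run}\defeq\exof{Y_{\run+1}\given\curr[\filter]}$ and $\xi_{\run}$ a martingale difference, so $\abs{b_{\run}}\leq\sqrt{a_2}\,\step_{\run}$ by \cref{itm:prob-est4} and Jensen. Three structural facts carry the argument. First, by \cref{itm:prob-est2}, $b_{\run}\geq 0$ on $\{S_{\run}>\eps_{\run}\}$, so between the moving barrier $\eps_{\cdot}$ and the target $\delta_0$ the process $S_{\cdot}$ is a nonnegative submartingale. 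Second, expanding $S_{\run+1}^2-S_{\run}^2=Y_{\run+1}^2+2S_{\run}Y_{\run+1}$ and combining \cref{itm:prob-est3,itm:prob-est4} with the bound on $b_{\run}$ forces $\exof{Y_{\run+1}^2\given\curr[\filter]}\gtrsim\step_{\run}^2$ as soon as $S_{\run}$ is below a fixed multiple of $\step_{\run}$; hence $S_{\cdot}$ has genuine fluctuations of order $\step_{\run}$ at every small step and cannot be pinned near $0$. Third, by \cref{itm:prob-est1}, $\abs{Y_{\run}}=o(\sqrt{\alpha_{\run}})$ and $\eps_{\run}=o(\sqrt{\alpha_{\run}})$, so both the jumps and the barrier are negligible against the residual fluctuation budget $\alpha_m=\sum_{\runalt\geq m}\step_{\runalt}^2$. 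Taking $m'$ to be the first index at which $\sum_{\runalt=m}^{m'}\step_{\runalt}^2$ exceeds a large absolute constant $\Lambda$, one runs $S_{\cdot}$ until it first hits $\delta_0$ or drops back below $\eps_{\cdot}$, lower-bounds the probability of the former by optional stopping on the submartingale together with the variance bound, and concludes — via a Paley--Zygmund / martingale small-ball estimate — that over $(m,m']$ the process makes $\Theta(1)$ essentially independent attempts to climb to $\delta_0$, each succeeding with probability bounded below; this yields the uniform constant $c_0$. Feeding this back into the reduction gives $\probof{\Omega_0}=0$.

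The main obstacle is obtaining a lower bound $c_0$ that is uniform in the \emph{starting height} $S_m$, which may be arbitrarily small, and in $m$. Optional stopping on $S_{\cdot}$ alone only yields a reaching probability of order $(S_m-\eps_m)/\delta_0$, which degenerates as $S_m\to 0$; one must exploit that the variance lower bound \cref{itm:prob-est3} holds \emph{unconditionally} — in particular below the barrier $\eps_{\cdot}$ — to show that $S_{\cdot}$ is repeatedly kicked back up to scale $\eps_{\cdot}$ and beyond, while the increment control \cref{itm:prob-est1} makes these fluctuations behave like a Brownian motion at scale $\sqrt{\alpha_m}$ rather than being derailed by a single large jump. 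Turning this ``repeated independent attempts'' picture into a rigorous uniform bound is precisely the delicate point for which \cref{lem:prob-est} was formulated in \cite{Pem92,BH96}, and I would import that machinery essentially verbatim, checking only that hypotheses \cref{itm:prob-est1,itm:prob-est2,itm:prob-est3,itm:prob-est4} are exactly what the estimate consumes.
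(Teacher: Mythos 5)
The paper does not prove \cref{lem:prob-est}; it imports it verbatim from \citet{Pem92} as extended in \citet{BH96}, so there is no in-paper proof to compare against, but your sketch still has two genuine gaps that would sink a full write-up. The first is the reduction at the outset: from the (correct) inclusion $\{\lim_{\run}S_{\run}=0\}\subseteq\bigcup_{m}\bigcap_{\run\geq m}\{S_{\run}<\delta_0\}$ you pass to the claim that it ``suffices'' to show $\probof{S_{\run}<\delta_0 \text{ for all } \run>m\given\filter_m}=0$ for each $m$. That is strictly stronger than the lemma and is false under the stated hypotheses: conditions \crefrange{itm:prob-est1}{itm:prob-est4} are perfectly compatible with $S_{\run}$ converging almost surely to a random \emph{positive} limit strictly below $\delta_0$, in which case the confinement event you target has positive probability even though $\probof{S_{\run}\to 0}=0$. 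For instance, take $\step_{\run}=1/\run$, $\eps_{\run}=\step_{\run}$, and increments $Y_{\run+1}=\pm\step_{\run}$ with equal conditional probability when $S_{\run}\geq\step_{\run}$ and $Y_{\run+1}=+\step_{\run}$ otherwise: all four hypotheses hold with $a_1=a_2=1$, the martingale part is $L^2$-bounded so $S_{\run}$ converges a.s., and its limit falls in $(0,\delta_0)$ with positive probability, so the event ``$S_{\run}<\delta_0$ forever past $m$'' is not null.

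The second gap is the escape estimate. You fix a macroscopic level $\delta_0$ and look for a window $(m,m']$ with $\sum_{\runalt=m}^{m'}\step_{\runalt}^2\geq\Lambda$ over which $S$ reaches $\delta_0$ with probability $\geq c_0$ uniformly in $m$. But the very definition $\alpha_{\run}=\sum_{\runalt\geq\run}\step_{\runalt}^2$ presupposes $\sum_{\run}\step_{\run}^2<\infty$ (and this is exactly the regime in which \cref{lem:prob-est} is invoked in the proof of \cref{thm:avoid}), so $\alpha_m\to 0$ and no such window exists once $\alpha_m<\Lambda$. More fundamentally, the optional-stopping computation on $S^2$ with hypothesis \cref{itm:prob-est3} gives only $\probof{\tau_m=\infty\given\filter_m}\leq(\delta_0+o(\sqrt{\alpha_m}))^2\big/(a_1\alpha_m)$, which blows up as $m\to\infty$ and is vacuous for a fixed $\delta_0$. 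The Pemantle/Benaïm--Hirsch argument instead escapes to a \emph{shrinking} target of order $\sqrt{\alpha_m}$ (for which the same computation does yield a bound uniformly away from $1$), uses \cref{itm:prob-est1,itm:prob-est2} to treat $S$ as a submartingale above the moving barrier $\eps_{\run}=o(\sqrt{\alpha_{\run}})$, and closes the argument via Lévy's zero--one law applied to $\probof{\,\cdot\,\given\filter_m}$ rather than by iterating escapes to a fixed level. Those two ingredients --- the $\sqrt{\alpha_m}$-scaled target and the zero--one-law finish --- are exactly what your sketch is missing.
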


\begin{lemma}\label{lem:prob-est2}
Let $S_{\run}$ be a nonnegative stochastic process, $S_{\run} = S_0 + \insum_{\runalt = 1}^\run Y_{\runalt}$ where $Y_{\run}$ is $\curr[\filter]$-measurable and $|Y_{\run}| \leq C$ almost surely for some constant $C$.
Assume that $\sum_{\run}\curr[\step]^2 = \infty$, and there exists $c>0, N' \in \N$ such that for all $n\geq N'$, 
\begin{equation}
\exof{  \next[S]^2-\curr[S]^2 \given \curr[\filter] } \geq c \curr[\step]^2.
\end{equation}
Then
\begin{equation}
\probof*{\lim_{\run\to\infty}\curr[S]=0}=0.
\end{equation}
\end{lemma}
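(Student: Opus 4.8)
The plan is to use the drift hypothesis $\exof{\next[S]^{2}-\curr[S]^{2}\given\curr[\filter]}\ge c\,\curr[\step]^{2}$ to show that $S_{\run}$ cannot remain bounded, which rules out the event $\braces{S_{\run}\to0}$ since there $S_{\run}$ is a convergent (hence bounded) sequence. To this end, note first that for $\run\ge N'$ the conditional increments of $S_{\run}^{2}$ are nonnegative, so $(S_{\run}^{2})_{\run\ge N'}$ is a submartingale. (Integrability is not a real issue: $|S_{\run}|\le|S_{0}|+\run C$, and $S_{0}$ is bounded in our applications; in general one conditions on the $\sigma$-algebra at stage $N'$, which we suppress in what follows.) Its Doob decomposition is $S_{\run}^{2}=M_{\run}+A_{\run}$, where $M_{\run}$ is a martingale with $M_{N'}=S_{N'}^{2}\ge0$ and $A_{\run}=\sum_{\runalt=N'}^{\run-1}\exof{S_{\runalt+1}^{2}-S_{\runalt}^{2}\given\filter_{\runalt}}$ is predictable and nondecreasing. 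Combining the increment lower bound with $\sum_{\run}\curr[\step]^{2}=\infty$ gives $A_{\run}\to\infty$.

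The next step is a localization producing a compensated martingale with genuinely bounded increments. For each integer $R$, put $\tau_{R}=\inf\setdef{\run\ge N'}{S_{\run}\ge R}$ and define $\widetilde M_{\run}=M_{\run\wedge\tau_{R}}$. Since $S_{\runalt+1}^{2}-S_{\runalt}^{2}=2S_{\runalt}Y_{\runalt+1}+Y_{\runalt+1}^{2}$ and $|Y_{\run}|\le C$, on the event $\braces{\runalt<\tau_{R}}$ one has $|S_{\runalt+1}^{2}-S_{\runalt}^{2}|\le 2RC+C^{2}$, so the increments of $\widetilde M_{\run}$ are bounded by a constant $D_{R}$ depending only on $R$ and $C$.

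Now observe that on $\braces{\tau_{R}=\infty}=\braces{S_{\run}<R\ \text{for all}\ \run\ge N'}$ we have $\widetilde M_{\run}=S_{\run}^{2}-A_{\run}\le R^{2}-A_{\run}\to-\infty$. A martingale with bounded increments cannot tend to $-\infty$ on a set of positive probability: stopping $\widetilde M$ at the first instant it exceeds a level $a>0$ gives a martingale bounded above by $\max(S_{N'}^{2},a+D_{R})$, hence of uniformly bounded expected absolute value, hence a.s.\ convergent; letting $a\to\infty$ shows that $\widetilde M_{\run}$ converges to a finite limit a.s.\ on $\braces{\sup_{\run}\widetilde M_{\run}<\infty}$. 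Since $\braces{\widetilde M_{\run}\to-\infty}\subseteq\braces{\sup_{\run}\widetilde M_{\run}<\infty}$ and these two behaviors are incompatible, $\braces{\widetilde M_{\run}\to-\infty}$ is null, whence $\probof{\tau_{R}=\infty}=0$. Finally, $\braces{S_{\run}\to0}\subseteq\braces{\sup_{\run}S_{\run}<\infty}\subseteq\bigcup_{R\in\N}\braces{\tau_{R}=\infty}$ is a countable union of null events, so $\probof{\lim_{\run\to\infty}\curr[S]=0}=0$, as claimed.

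This is essentially the argument of \citet{Pem92}, in the sharpened form of \citet{BH96}, so no genuinely new idea is required. The one point that I expect to require care is the localization in the second step: $\tau_{R}$ must be chosen so that the compensated martingale $\widetilde M$ truly has bounded increments, while the stopping event $\braces{\tau_{R}=\infty}$ must still contain $\braces{S_{\run}\to0}$; everything else — the Doob decomposition and the attendant integrability — is routine bookkeeping.
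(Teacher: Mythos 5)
The paper states this lemma without proof, citing only \citet{Pem92} and \citet{BH96}, so there is no in-paper argument to compare against; your proposal supplies the missing argument, and it is correct. It is also essentially the standard proof behind those references: take the Doob decomposition $S_{\run}^{2}=M_{\run}+A_{\run}$, use the drift hypothesis plus $\sum_{\run}\curr[\step]^{2}=\infty$ to force $A_{\run}\to\infty$, and then kill the unbounded increments by stopping at $\tau_{R}=\inf\setdef{\run\ge N'}{S_{\run}\ge R}$ so that the stopped martingale has increments bounded in terms of $R$ and $C$; the convergence-or-oscillation dichotomy for martingales with bounded increments then rules out $M_{\run\wedge\tau_{R}}\to-\infty$ on a set of positive probability, and $\braces{S_{\run}\to 0}\subseteq\bigcup_{R}\braces{\tau_{R}=\infty}$ finishes the job. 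The two points that need care are exactly the ones you flag: the localization $\tau_{R}$ must bound the martingale increments while still covering $\braces{S_{\run}\to 0}$ (which it does, since any convergent sequence is bounded), and the integrability of $M_{\run}$ is handled by conditioning at stage $N'$, which makes $S_{N'}$ deterministic and gives $\abs{S_{\run}}\le \abs{S_{N'}}+(\run-N')C$. Both are dealt with correctly, so the argument is complete.
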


Our proof will be concluded by verifying all the premises of \crefrange{lem:prob-est}{lem:prob-est2}.
To that end, note first that the sequence $S_{\run}$ defined in \eqref{eq:Sdef} is nonnegative by construction.
We will then separate the analysis into two cases:

\para{Case 1: \normalfont\itshape Square-summable step-sizes \upshape($\sum_{\run}\curr[\step]^2 < \infty$)}
In this case, we have
\begin{align}
\lim_{\run \to \infty}   \frac{\curr[\step]}{ \sqrt{   \insum_{\runalt=\run}^\infty  \step_{\runalt}^2  }} = 0
\end{align}so $\curr[\step] = o \parens*{ \sqrt{ \insum_{\runalt=\run}^\infty  \step_{\runalt}^2}  }$.
This fact will be used in the proof when we invoke \cref{lem:prob-est} below with $\eps_{\run} = \bigoh(\curr[\step])$ and $\alpha_{\run} = \insum_{\runalt=\run}^\infty  \step_{\runalt}^2$ therein.
The verification process then proceeds as follows:

\begin{itemize}
[leftmargin=0pt]
\addtolength{\itemsep}{\medskipamount}
\item
\textbf{Verifying \cref{lem:prob-est}\cref{itm:prob-est1,itm:prob-est4}:}
By the Lipschitz continuity of $\Euc[\eta]$, we know that
\begin{align}
\norm{\Euc[\eta]\Ecurr - \Euc[\eta]\Enext} &\leq \lips'\norm{ \Ecurr - \Enext} \nonumber\\
&= \curr[\step]\norm{  \Euc(\Ecurr) + \Enoise + \Ebias  } 
\end{align}where $\lips'$ is the Lipschitz constant of $\Euc[\eta]$.
We have seen in the analysis of \eqref{eq:er-size} that $\norm{  \Euc(\Ecurr) + \Enoise + \Ebias  } = \bigoh(1)$ almost surely by \cref{asm:signal} on the event $T=\infty$.
Therefore, $\abs{\next[Y]} = \bigoh({\curr[\step]}) = o ( \sqrt{\alpha_n}) $ which implies both \cref{lem:prob-est}\cref{itm:prob-est1,itm:prob-est4}.

\item
\textbf{Verifying \cref{lem:prob-est}\cref{itm:prob-est2}:}
Let $k' = k \norm{\Euc} + \sdev$ where $k$ is given by \cref{prop:lyapunov}\cref{itm:lyapunov3} and $\norm{\Euc} \defeq \sup  \setdef{ \Euc(\Epoint)}{ \Epoint \in \nhdc}$ and $\sdev$ is the uniform bound of $\curr[\noise]$.
If $\run\leq T$, using \cref{prop:lyapunov}\cref{itm:lyapunov2,itm:lyapunov3,itm:lyapunov5,itm:lyapunov6} we have 
\begin{align}
\label{eq:eta-diff}
\Euc[\eta](\Enext) - \Euc[\eta](\Ecurr) 
	&\geq  \curr[\step]  \Jac \Euc[\eta](\Ecurr)  \parens*{  \Euc(\Ecurr) +\Ecurr[\noise] + \Ecurr[\bias] }
	\notag\\
	&\qquad
		+  \Jac \Euc[\eta](\Ecurr) \Eer - k \curr[\step]^2 \parens*{ \norm{\Euc} + \norm{\Ecurr[\noise]}
		+ \norm{\Ecurr[\bias]}   }^2
		\notag\\
	&\geq \curr[\step]  \beta \Euc[\eta](\Ecurr) +  \curr[\step]  \Jac \Euc[\eta](\Ecurr) \Ecurr[\noise]+ \curr[\step]  \Jac \Euc[\eta](\Ecurr) \Ecurr[\bias]
	\notag\\
	&\qquad
		+  \Jac \Euc[\eta](\Ecurr) \Eer - 2k' \curr[\step]^{2} - 2k \curr[\step]^{2}  \norm{\Ecurr[\bias]}^2.
\end{align}
By \cref{asm:signal}, there exists a constant $c'>0$ such that $-\norm{\Ecurr[\bias]} \geq - c' \curr[\step]$ \as.
Combining this with the Lipschitz continuity of $\Euc[\eta]$ and \eqref{eq:er-size}, we can merge the last four terms in \eqref{eq:eta-diff} as 
\begin{align}
\label{eq:eta-diff-simplified}
\Euc[\eta](\Enext) - \Euc[\eta](\Ecurr) 
&\geq \curr[\step]  \beta \Euc[\eta](\Ecurr) +  \curr[\step]  \Jac \Euc[\eta](\Ecurr) \Ecurr[\noise]- 2k'' \curr[\step]^{2}
\end{align}for some constant $k'' > 0$.
We thus get
\begin{equation}
\label{eq:noise-in}
\one_{\braces*{\run \leq T} } \exof{Y_{\run+1} \vert \curr[\filter] } \geq \one_{\braces*{\run \leq T} } \bracks*{ \curr[\step]  \beta \Euc[\eta](\Ecurr)  - 2k'' \curr[\step]^{2} + \curr[\step] \exof{  \Jac \Euc[\eta](\Ecurr) \Ecurr[\noise] \vert \curr[\filter] } }.
\end{equation}

By \cref{prop:lyapunov}\cref{itm:lyapunov2} again, we have
\begin{align}
\nn
\exof{  \Jac \Euc[\eta](\Ecurr) \Ecurr[\noise] \vert \curr[\filter] } & \geq \Jac \Euc[\eta](\Ecurr)\exof{  \Ecurr[\noise] \vert \curr[\filter] } =0 \\
\label{eq:noise-out}
&= \Jac \Euc[\eta](\Ecurr)\exof{  \Jac\embed_{\ssstyle\curr}\curr[\noise] \vert \curr[\filter] } =0
\end{align}since we have assumed the noise to be zero mean.
Combining \eqref{eq:noise-in} and \eqref{eq:noise-out}, we then get
\begin{equation}
\label{eq:nlT}
\one_{\braces*{\run \leq T} } \exof{Y_{\run+1} \vert \curr[\filter] } \geq \one_{\braces*{\run \leq T} } \bracks*{ \curr[\step]  \beta \Euc[\eta](\Ecurr)  - 2k'' \curr[\step]^{2} } .
\end{equation}
If $\run  > T$, $Y_{\run+1} = \curr[\step]$ so trivially 
\begin{equation}
\label{eq:ngT}
\one_{\braces*{\run \leq T} } \exof{Y_{\run+1} \vert \curr[\filter] } \geq 0.
\end{equation}Combining \eqref{eq:nlT} with \eqref{eq:ngT}, we see that  \cref{lem:prob-est}\cref{itm:prob-est2} is satisfied with $\eps_{\run} = \frac{k''}{\beta} \curr[\step]$.

\item
\textbf{Verifying \cref{lem:prob-est}\cref{itm:prob-est3}:}
We begin by observing that 
\begin{equation}
\label{eq:ob}
\exof{  \next[S]^2 - \curr[S]^2 \vert \curr[\filter]  } = \exof{   \next[Y]^2 \vert \curr[\filter] } + 2 \curr[S] \exof{   \next[Y] \vert \curr[\filter] } .
\end{equation}
If $\curr[S] \geq \curr[\eps]$, then the last term on the right-hand side of \eqref{eq:ob} is non-negative by \cref{lem:prob-est}\cref{itm:prob-est2} that we just verified above.
If  $\curr[S] < \curr[\eps]$, \eqref{eq:nlT} with \eqref{eq:ngT} imply that $\curr[S]\exof{\next[Y] \vert \curr[\filter] }  \geq - \curr[\eps] k'' \curr[\step]^2  = - \bigoh ( \curr[\step]^3)$.
In other words, \eqref{eq:ob} can be rewritten as
\begin{equation}
\label{eq:ob2}
\exof{  \next[S]^2 - \curr[S]^2 \vert \curr[\filter]  } \geq \exof{   \next[Y]^2 \vert \curr[\filter] } - \bigoh ( \curr[\step]^3) .
\end{equation}
Below, we shall prove that $\exof{   \next[Y]^2 \vert \curr[\filter] }    \geq b_1 \curr[\step]^2  $ for some $b_1 >0$ and $\run$ large enough.
Combining this with \eqref{eq:ob2} proves \cref{lem:prob-est}\cref{itm:prob-est3}.

\newcommand{\cfilter}{\curr[\filter]}
\newcommand{\Ecurve}{\curve^\Eucli}

From \eqref{eq:eta-diff-simplified}, we deduce
\begin{equation}
\label{eq:hard}
\one_{ \braces*{n\leq T} } \bracks*{   \exof{  \left(\next[Y]\right)_+ \vert \curr[\filter] }  -   \parens*{  \curr[\step] \exof{ (\Jac\Euc[\eta](\Ecurr)\Ecurr[\noise])_+  \vert \curr[\filter] }  - k'' \curr[\step]^2 }   } \geq 0.
\end{equation}

We now claim that
\begin{equation}
\label{eq:hard2}
\one_{ \braces{\run\leq T} \cap \braces{\Ecurr \notin \set} } \Big(    \exof{ (\Jac \Euc[\eta](\Ecurr) \Ecurr[\noise])_+  \vert \curr[\filter]  } \Big)  \geq c_1 \lbound.
\end{equation}where $c_1$ is given by \cref{prop:lyapunov}\cref{itm:lyapunov4} and $\lbound$ is defined in \cref{asm:signal}.
To see this, recall that $\norm{\curr[\noise]}_{\ssstyle\curr} < \sdev$ by \cref{asm:signal}.
Moreover, we have $\Ecurr \in \nhdc$ on the event $T=\infty$.
\cref{prop:lyapunov}\cref{itm:lyapunov1} then implies $\Euc[\eta]$ is differentiable on $\nhdc \setminus \set$, and \cref{prop:lyapunov}\cref{itm:lyapunov4} further shows that
\begin{align}
\nn
&\one_{ \braces{\run\leq T} \cap \braces{\Ecurr \notin \set} } \Big(    \exof{ (\Jac \Euc[\eta](\Ecurr) \Ecurr[\noise])_+  \vert \curr[\filter]  } \Big)
	\notag\\
	&\qquad
	= \one_{ \braces{\run\leq T} \cap \braces{\Ecurr \notin \set} } \Big(    \exof{ \pospart{ \inner{\Euc[\eta](\Ecurr)}{\Ecurr[\noise]}}  \vert \curr[\filter]  } \Big)
	\notag\\
	&\qquad
	= \one_{ \braces{\run\leq T} \cap \braces{\Ecurr \notin \set} } \Big(    \exof*{ \pospart{ \inner{ \Pi_{\Etspace[\Ecurr]} \parens*{\Euc[\eta](\Ecurr)}}{\Ecurr[\noise]}}  \vert \curr[\filter]  } \Big)
	\notag\\
	&\qquad
	\geq c_1 \lbound
\end{align}
where we have used the fact that $\Enoise\in\Etspace[\curr]$ and \cref{asm:signal}.


\label{eq:hard2}
If $\Ecurr \in \set$, we can choose a unit vector $\Euc[v]_{\run} \in \ker(\eye - \Jac \Eucl(\Ecurr)  )^\perp \cap \Etspace[\Ecurr]$ where $\Eucl$ denotes the projection operator onto $\set$; note that $\Euc[v]_{\run} \in \ker(\eye - \Jac \Eucl(\Ecurr)  )^\perp \cap \Etspace[\Ecurr] \neq \emptyset$ since $\umfd$ contains at least one direction in $\Etspace$ for all $\Epoint\in\nhdc \cap \Epoints$.
By the definition of $\Euc[v]_{\run}$, we have $\inner{\Ecurr[\noise]}{\Euc[v]_{\run}} = \inner{\Ecurr[\noise] - \Jac \Eucl(\Ecurr) \Ecurr[\noise]}{\Euc[v]_{\run}} $.
Let $\history=\braces{\run\leq T} \cap \braces{\Ecurr \in \set}.$ By \cref{prop:lyapunov}\cref{itm:lyapunov4}, Cauchy-Schwartz, and \cref{asm:signal}, we get
\begin{align}
\one_{ \history }\exof*{ \pospart{\Jac \Euc[\eta](\Ecurr) \Ecurr[\noise]}  \vert \curr[\filter]  }
&\geq c_1 \one_{ \history }\exof*{ \norm{\Ecurr[\noise] - \Jac \Eucl(\Ecurr) \Ecurr[\noise]}  \vert \curr[\filter]  } 
	\notag\\
&\geq c_1 \one_{ \history }\exof*{\pospart{\inner{\Ecurr[\noise] - \Jac \Eucl(\Ecurr) \Ecurr[\noise]}{\Euc[v]_{\run}}} \vert \curr[\filter]  } \notag\\
&= c_1 \one_{ \history }\exof*{ \pospart{ \inner{\Ecurr[\noise] }{\Euc[v]_{\run}}} \vert \curr[\filter]  } \nonumber\\
&= c_1 \one_{ \history }\exof*{ \pospart{ \inner{\curr[\noise] }{ v_{\run}}_{\ssstyle\curr} } \vert \curr[\filter]  }
\end{align}
where $v_{\run}$ is the \emph{pullback} of $\Euc[v]_{\run}$ under $\embed$.
Since $\embed$ is an isometry, the pullback preserves the inner product, and therefore
\begin{align}
\one_{ \history }\exof*{ \pospart{\Jac \Euc[\eta](\Ecurr) \Ecurr[\noise]}  \vert \curr[\filter]  }\geq c_1\lbound\one_\history \label{eq:hard3}
\end{align}
by \cref{asm:signal}.
Combining \cref{eq:ngT,eq:hard,eq:hard2,eq:hard3} then gives 
\begin{equation}
\exof{   \pospart{\next[Y]}  \vert \curr[\filter] }  \geq  c_1 \lbound\curr[\step]- k'' \curr[\step]^2.
\end{equation}On the other hand, we always have $\exof{   \next[Y^2]  \vert \curr[\filter] }   \geq \exof{   \pospart{\next[Y]}  \vert \curr[\filter] }^2$ by Jensen.
It then follows that $\exof{   \next[Y^2]  \vert \curr[\filter] }  \geq b_1 \curr[\step]^2$ for some $b_1 >0$ and large enough $\run$ as desired.

We have now verified conditions \crefrange{itm:prob-est1}{itm:prob-est4} in \cref{lem:prob-est}.
Thus, \cref{lem:prob-est} concludes that
\begin{equation}
\label{eq:final}
\probof{  \lim\nolimits_{\run \to \infty}S_{\run} =0  } = 0
\end{equation}
which finishes the proof for the case of $\sum_{\run} \curr[\step]^2 < \infty$.
\end{itemize}

\para{Case 2: \normalfont\itshape Non-square-summable step-sizes \upshape($\sum_{\run}\curr[\step]^2 = \infty$)}
When $\sum_{\run}\curr[\step]^2 = \infty$, the same proof above shows that $\exof{   \next[Y^2]  \vert \curr[\filter] }  \geq b_1 \curr[\step]^2$ for some $b_1 >0$ and large enough $\run$.
Combining this with \eqref{eq:ob2} yields\begin{equation}
\exof{  \next[S]^2 - \curr[S]^2 \vert \curr[\filter]  } \geq c  \curr[\step]^2 
\end{equation}for some $c>0$.
\cref{lem:prob-est2} then implies that
\begin{equation}
\probof{  \lim\nolimits_{\run \to \infty}S_{\run} =0  } = 0
\end{equation}
and our claim follows.
\end{proof}

\subsection{Proof of \cref{prop:oracle}}
\label{app:retraction}

We conclude this appendix with the application of \cref{thm:avoid} to \crefrange{alg:RSGD}{alg:NGD} under the explicit oracle assumptions of \cref{prop:oracle}.
For convenience, we restate the relevant result below:

\oracle*

\begin{remark*}
In additional to the claimed \crefrange{alg:RSGD}{alg:NGD}, we will further prove the same conclusion for the two algorithms considered in \cref{app:examples}.
\end{remark*}

\begin{proof}
By \cref{thm:avoid}, it suffices to verify \cref{asm:signal} under \eqref{eq:SFO-req} and the event $\dist(\curr,\sset)\to 0$. We proceed method by method.

\para{\cref{alg:RSGD}}
Since $\curr[\bias]=0$ in \cref{alg:RSGD}, \cref{asm:signal} holds trivially by \eqref{eq:SFO-req}.

\para{\cref{alg:retraction,alg:SMD,alg:NGD}}

By definition, $\retrbase_{\point}(\vvec)$ is a smooth map and hence satisfies $\lim_{\vvec\to 0} \retrbase_{\point}(\vvec) = \point $.
On the event $\dist(\curr,\sset)\to 0$, we have $\vecfield(\curr)+\curr[\noise]+\curr[\bias] = \bigoh(1)$, and therefore $\next$ lies in the injectivity radius of $\curr$ with probability 1 for $\run$ large enough.
As a result, the mapping $\log_{\curr}(\next)$ is well-define for all $\run$ large enough.

\renewcommand{\curve}{c}

We first consider \cref{alg:retraction,alg:NGD} whose proofs are identical since they are both are the form:
\begin{equation}
\label{eq:retraction-sgd}
\begin{alignedat}{2}
\next
	&= \retrbase_{\curr}(\curr[\step] \orcl(\curr;\curr[\seed])).
\end{alignedat}
\end{equation}
Let $\curr[\tilde{\vecfield}] \in\tspace[\curr]$ be the vector such that $\exp_{\ssstyle \curr}\parens*{  \curr[\step]  \curr[\tilde{\vecfield}] } = \next$, \ie 
\begin{equation}
\label{eq:retrinverse}
\curr[\step]  \curr[\tilde{\vecfield}] = \log_{\ssstyle \curr} \Big( \retrbase_{\curr}(\curr[\step] \orcl(\curr;\curr[\seed]))\Big).
\end{equation}
Then \eqref{eq:retraction-sgd} is an \ac{RRM} scheme with $\curr[\error] = \curr[\tilde{\vecfield}] - \vecfield(\curr)$ where $\curr[\tilde{\vecfield}]$ is defined in \eqref{eq:retrinverse}.

Now, consider the curve $\curve(\ctime) \defeq \retrbase_{\curr}(\ctime \orcl(\curr;\curr[\seed]))$.
By \eqref{eq:SFO-req}, on the event $\dist(\curr,\sset)\to 0$, the curve $\curve(\ctime)$ lies in the injectivity radius of $\curr$ almost surely for all $ \ctime \in [0,\curr[\step]] $ and all $\run$ large enough.
Let $\hat{\curve}(\ctime)$ be the smooth curve of $\curve(\ctime)$ in the normal coordinate with base $\curr$ and an arbitrary orthonormal frame, and let $\hat{\state}_{\run+1}$ be the normal coordinate of $\next$.
Also, let $\curr[\tilde{\vecfield}]^{\mathrm{N}}$ be the (Euclidean) vector of $\curr[\tilde{\vecfield}]$ expanded in the chosen orthonormal basis, and define $\orcl^{\mathrm{N}}(\curr;\curr[\seed])$ and $\err^{\mathrm{N}}(\curr;\curr[\seed])$ similarly.
By definition, $\hat{\state}_{\run+1}$ is nothing but $\curr[\step] \curr[\tilde{\vecfield}]^{\mathrm{N}} $.
In addition, by \eqref{eq:SFO-req}, we have
\begin{align}
\norm{\err^{\mathrm{N}}(\curr;\curr[\seed])} = \norm{\err(\curr;\curr[\seed])}_{\ssstyle\curr} \leq \sdev
\end{align}for some $\sdev < \infty$.

Since $\curr = \curve(0)$ and $\next = \curve(\curr[\step])$, by properties of a retraction map we must have
\begin{align}
\nn
\curr[\step] \curr[\tilde{\vecfield}]^{\mathrm{N}} &= \hat{\curve}(\curr[\step]) \\
\nn
&=\hat{\curve}(0) +  \curr[\step]\dot{\hat{\curve}}(0)  + \bigoh\parens*{ \curr[\step]^2  \norm{\dot{\hat{\curve}}(0)}_2^2 } \\
\nn
&= \curr[\step] \orcl^{\mathrm{N}}(\curr;\curr[\seed]) +  \bigoh\parens*{ \curr[\step]^2  \norm{ \orcl(\curr;\curr[\seed]) }_{\ssstyle\curr}^2 } \\
\label{eq:last}
&\eqdef \curr[\step] \orcl^{\mathrm{N}}(\curr;\curr[\seed]) + \curr[\step] \tilde{\bias}_{\run}
\end{align}where $\tilde{\bias}_{\run} = \bigoh\parens*{ \curr[\step]  \norm{ \orcl(\curr;\curr[\seed]) }_{\ssstyle\curr}^2 } = \bigoh(\curr[\step])$.
Therefore, 
\begin{equation}
\norm*{\curr[\bias]}_{\ssstyle\curr}
	= \norm*{\exof*{\curr[\error] \given \curr[\filter] }}_{\ssstyle\curr}
	=  \norm*{\exof*{  \tilde{\bias}_{\run}   \given \curr[\filter] }}
	= \bigoh\parens*{ \curr[\step]  }
\end{equation}which proves the condition for $\curr[\bias]$ in \cref{asm:signal}.
On the other hand, \eqref{eq:last} shows that
\begin{align*}
\norm{\curr[\noise]}_{\ssstyle\curr} &\leq \norm{\orcl^{\mathrm{N}}(\curr;\curr[\seed]) + \tilde{\bias}_{\run}} + \norm{\exof*{\orcl^{\mathrm{N}}(\curr;\curr[\seed]) + \tilde{\bias}_{\run}}} \\
&= \bigoh(1)
\end{align*}since $\norm{\orcl^{\mathrm{N}}(\curr;\curr[\seed])} = \bigoh(1)$ by \eqref{eq:SFO-req} and $ \tilde{\bias}_{\run} = \bigoh(\curr[\step])$.
Finally, for any unit vector $\vvec\in\tspace[\curr]$, \eqref{eq:last} implies
\begin{align}
\exof{ \pospart{  \inner{\vvec}{\curr[\noise]}_{\ssstyle\curr} } } &\geq \exof{ \pospart{  \inner{\vvec}{\err(\curr;\curr[\seed])}_{\ssstyle\curr} } } - \norm{\tilde{\bias}_{\run}}
	\notag\\
&=\exof{ \pospart{  \inner{\vvec}{\err(\curr;\curr[\seed])}_{\ssstyle\curr} } } - \bigoh(\curr[\step]).
\end{align}
Since $\curr[\step]\to 0$, this finishes the proof of \cref{alg:retraction,alg:NGD}.
For \cref{alg:SMD}, an Euclidean oracle of the form \eqref{eq:SFO-req} translates to a Riemannian oracle with $\err'(\point;\seed) \defeq \nabla^2 h(\point)^{-1}\err(\point;\seed)$.
It then suffices to note that, on the event $\dist(\curr,\sset)\to 0$, $\nabla^2\hreg(\curr)$ is both upper and lower bounded.

\para{\cref{alg:ROG,alg:RPPM,alg:RSEG}}


For \eqref{eq:RSEG}, $\curr[\noise] = \ptrans{\ssstyle\lead}{\ssstyle\curr}{ \err(\lead;\lead[\seed])}$ so 
\begin{equation}
\norm*{\curr[\noise]}_{\ssstyle\curr}
	= \norm*{ \err(\lead;\lead[\seed]) }_{\ssstyle\lead}
	\leq \sdev
\end{equation}
by \eqref{eq:SFO-req} and the fact that the parallel transport map is a linear isometry.
For the bias term, the definition of \eqref{eq:RSEG} yields
\begin{equation}
\norm{\curr[\bias]}
	= \norm{ \ptrans{\ssstyle\lead}{\ssstyle\curr}{   \vecfield(\lead)}  - \vecfield(\curr)}_{\ssstyle\curr}
	\leq \lips \dist\parens*{\lead, \curr}
	= \curr[\step] \lips\norm{\orcl(\curr;\curr[\seed])}_{\ssstyle \curr} = \bigoh(1)
\end{equation}
by the same argument as for \cref{alg:retraction,alg:SMD,alg:NGD}.


For \eqref{eq:ROG}, we have
$\curr[\noise] = \ptrans{\ssstyle\lead}{\ssstyle\curr}{\err(\curr;\lead[\seed])}$
and
$\curr[\bias] = \ptrans{\ssstyle\lead}{\ssstyle\curr}{   \vecfield(\lead)}  - \vecfield(\curr)$,
so \cref{asm:signal} can be checked exactly as in the case of \cref{alg:RSEG} above.
The analysis for \cref{alg:RPPM} is similar so we omit the details.
\end{proof}

\bibliographystyle{icml}
\bibliography{IEEEabrv,bibtex/Bibliography-PM,bibtex/Bibliography-YPH}

\end{document}